 \newtheorem{thm}{Theorem}[section]
 \newtheorem{cor}[thm]{Corollary}
 \newtheorem{lem}[thm]{Lemma}
 \newtheorem{rmk}[thm]{Remark}
 \theoremstyle{definition}
 \theoremstyle{remark}
 \numberwithin{equation}{section}
\newcommand{\sm}{\left(\begin{smallmatrix}}
\newcommand{\esm}{\end{smallmatrix}\right)}
\newcommand{\mat}{\left(\begin{matrix}}
\newcommand{\emat}{\end{matrix}\right)}
\def\CC{\mathbb{C}}
\def\QQ{\mathbb{Q}}
\def\ZZ{\mathbb{Z}}
\def\det{\mathrm{det}}
\def\SL{\mathrm{SL}}
\begin{document}

\title{Independence between coefficients of two modular forms}


 \author{Dohoon Choi}
 \author{Subong Lim}

\address{Department of Mathematics, Korea University, 145 Anam-ro, Seongbuk-gu, Seoul 02841, Republic of Korea}
\email{dohoonchoi@korea.ac.kr}

\address{Department of Mathematics Education, Sungkyunkwan University, Jongno-gu, Seoul 03063, Republic of Korea}
\email{subong@skku.edu}

\subjclass[2010]{11F03, 11F30, 11F80}

\thanks{Keywords: Fourier coefficient, Modular form, Galois representation}

\begin{abstract}
Let $k$ be an even integer and $S_k$ be the space of cusp forms of weight $k$ on $\SL_2(\ZZ)$.
Let $S = \oplus_{k\in 2\ZZ} S_k$.
For $f, g\in S$, we let $R(f, g)$ be the set of ratios of the Fourier coefficients of $f$ and $g$ defined by
$R(f,g) := \{ x \in \mathbb{P}^1(\CC)\ |\ x = [a_f(p) : a_g(p)]\ \text{for some prime $p$}\}$,
where $a_f(n)$ (resp. $a_g(n)$) denotes the $n$th Fourier coefficient of $f$ (resp. $g$).
In this paper, we prove that if $f$ and $g$ are nonzero and $R(f,g)$ is finite, then $f = cg$ for some constant $c$.
This result is extended to the space of weakly holomorphic modular forms on $\SL_2(\ZZ)$. We apply it to  study the number of representations of a positive integer by a quadratic form.
\end{abstract}

\maketitle

\section{Introduction} \label{section1}

The Fourier coefficients of a modular form play crucial roles in studying the theory of modular forms.
In particular, the $q$-expansion principle (for example, see \cite{DI} or \cite{Kat}) shows that a modular form is determined by its Fourier coefficients.
A natural question is to find relations between two modular forms when a connection between their Fourier coefficients is given.
It was proved  by Ramakrishnan
\cite[Appendix]{DK} that
if $f$ and $g$ are normalized Hecke eigenforms of the same weight such that for all primes $p$ outside a set $T$ of density $\delta(T)<\frac{1}{18}$
\begin{equation*} \label{DKcondition}
a_f(p)^2 = a_g(p)^2,
\end{equation*}
then there exists a quadratic character $\chi$ such that
\[
f = g \otimes \chi.
\]
Here, $a_f(n)$ (resp. $a_g(n)$) denotes the $n$th Fourier coefficient of $f$ (resp. $g$).
 This result was also known to Blasius and Serre.

Let $k$ be an even integer, and $M_{k}^{!}$ be the space of weakly holomorphic modular forms of weight $k$ on $\SL_2(\ZZ)$. Let
\[
M^{!} :=\oplus_{k \in 2\mathbb{Z}} M_{k}^{!}.
\]
Suppose that $f$ and $g$ are weakly holomorphic modular forms in $M^!$.
We define a subset $R(f,g)$ of $\mathbb{P}^1(\mathbb{C})$ by
\begin{equation} \label{ratioset}
R(f,g):=\left\{ x \in \mathbb{P}^1(\CC)\ |\ x = [a_f(p) : a_g(p)]\ \text{for some prime $p$}\right\}.
\end{equation}
This is a set of  ratios of the Fourier coefficients of $f$ and $g$.
For example, if $a_f(p)^2 = a_g(p)^2$ for every prime $p$, then  $R(f,g) = \{ [1:1], [1:-1]\}$.
Therefore, if $f$ and $g$ are Hecke eigenforms and  $R(f,g) = \{ [1:1], [1:-1]\}$,
then $f = cg$ for some constant $c$.
In this vein, the objective of this paper is to classify $f$ and $g$ in $M^{!}$ such that $R(f,g)$ is a finite set.
Our main  result is as follows.

\begin{thm} \label{main1}
Suppose that  $f$ and $g$ are nonzero weakly holomorphic modular forms in $M^!$.
If $R(f,g)$ is a finite set, then $f = cg$ for some constant $c$.
\end{thm}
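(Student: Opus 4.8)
The plan is to reduce the general weakly holomorphic case to the cusp-form case and then exploit the arithmetic of Hecke eigenvalues.

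The plan is to reduce the general weakly holomorphic case to the cusp-form case and then exploit the arithmetic of Hecke eigenvalues. The first input is the asymptotics of $a_f(p)$ along primes in each regime. If $f$ has a pole at the cusp of order $N>0$, the Rademacher/circle-method expansion gives a genuine asymptotic $a_f(n)\sim A_f\, n^{\gamma_f} e^{4\pi\sqrt{Nn}}$ dictated by the most singular term; if $f$ is holomorphic of weight $k$ with nonzero Eisenstein component, then $a_f(p)\sim c_f\, p^{k-1}$; and if $f$ is cuspidal of weight $k$, then $|a_f(p)|=O(p^{(k-1)/2+\varepsilon})$ while $a_f(p)/p^{(k-1)/2}$ merely oscillates. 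Comparing $f$ and $g$: whenever at least one of them has a pole or a nonzero Eisenstein part, the quotient $a_f(p)/a_g(p)$ either has magnitude tending to $0$ or $\infty$, or else oscillates through a continuum of sizes. Since $R(f,g)$ is finite, a limit of $0$ or $\infty$ forces $[a_f(p):a_g(p)]$ to equal $[1:0]$ or $[0:1]$ for all large $p$, which contradicts the nonvanishing asymptotics above; matching the pole orders (resp. the weights of the Eisenstein parts) then pins the quotient to a nonzero constant $c$, and finiteness of $R(f,g)$ upgrades convergence to the exact relation $a_f(p)=c\,a_g(p)$ for all large $p$. Thus every case except the purely cuspidal one reduces to a single linear relation $a_h(p)=0$ for almost all $p$, where $h:=f-cg$.

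Next I would settle the auxiliary statement that $h\in M^!$ with $a_h(p)=0$ for almost all $p$ forces $h=0$. Running the asymptotic dichotomy in reverse, $h$ can carry neither a pole nor a nonzero Eisenstein part, so $h$ is a cusp form; writing $h=\sum_i c_i h_i$ in a basis of normalized Hecke eigenforms and invoking Rankin--Selberg orthogonality (the Dirichlet series $\sum_p a_{h_i}(p)\overline{a_{h_j}(p)}\,p^{-s}$ has a pole at the edge of convergence exactly when $h_i=h_j$), the relation $\sum_i c_i a_{h_i}(p)=0$ for almost all $p$ forces every $c_i=0$. Hence $h=0$ and $f=cg$.

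The remaining and hardest case is that $f$ and $g$ are both nonzero cusp forms, where $a_f(p)/a_g(p)$ genuinely oscillates and the convergence argument fails. Instead I would use the polynomial relation $\prod_i(\beta_i a_f(p)-\alpha_i a_g(p))=0$, valid at every prime, where $R(f,g)=\{[\alpha_i:\beta_i]\}_{i=1}^m$. After decomposing $f,g$ into Hecke eigenforms and normalizing, the vectors $(a_f(p)/p^{(k_f-1)/2},\,a_g(p)/p^{(k_g-1)/2})$ should jointly equidistribute against a non-atomic, two-dimensionally supported measure, via joint Sato--Tate and the independence of the attached $\ell$-adic representations (Chebotarev applied to $\rho_f\times\rho_g$ together with Ribet-type large-image results). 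A non-atomic planar measure pushes forward to a non-atomic measure on $\mathbb{P}^1(\CC)$, so $[a_f(p):a_g(p)]$ would take infinitely many values, contradicting finiteness—unless the independence degenerates. \emph{The main obstacle is precisely this degeneration analysis}: I must show the weights $k_f$ and $k_g$ coincide, track the Hecke eigenform components shared by $f$ and $g$, and prove that the finitely-many-ratios condition can survive only if the non-shared parts vanish and the shared part is common up to a scalar, i.e. $f=cg$. Handling CM forms, whose coefficients vanish on a density-$1/2$ set of primes, and quadratic-twist coincidences within this Galois-image bookkeeping is where the delicate work will lie.
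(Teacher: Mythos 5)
Your reduction of the non-cuspidal cases is sound and essentially parallels the paper: matching growth regimes via the circle-method asymptotics and the Hecke bound forces the pole orders and weights to agree, pins the surviving ratio to a single value $c$, and yields $a_f(p)=c\,a_g(p)$ for almost all $p$; your Rankin--Selberg argument that a form with $a_h(p)=0$ for almost all $p$ must vanish is a legitimate alternative to the paper's Galois-theoretic route for that sub-step (one only has to be slightly careful when the eigenform components of $h$ have different weights, by peeling off the top weight first). Two of the difficulties you flag at the end are actually vacuous here: on $\SL_2(\ZZ)$ there are no CM forms and no quadratic-twist coincidences between level-one eigenforms, so those cases need no separate treatment.

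The genuine gap is the case you yourself label the hardest: $f$ and $g$ both nonzero cusp forms that are not proportional. There your proposal stops at exactly the point where the theorem's content lives. You write down the correct polynomial relation $\prod_i\bigl(\beta_i a_f(p)-\alpha_i a_g(p)\bigr)=0$ and propose to contradict it by joint equidistribution of the normalized eigenvalue vectors, but you explicitly defer the ``degeneration analysis'' --- showing that the relevant measure is not supported on the union of hyperplanes unless $f=cg$ --- and that deferral is the whole proof. Moreover the route you sketch faces two concrete obstacles you do not address: (i) when the eigenform components of $f$ and $g$ have different weights, the hyperplane condition written in terms of Sato--Tate-normalized eigenvalues $a_{h_j}(p)/p^{(k_j-1)/2}$ acquires $p$-dependent coefficients, so ``pushing forward a fixed non-atomic measure'' does not directly apply; (ii) Galois-conjugate eigenforms contribute algebraically dependent coordinates, so one must either prove joint equidistribution for a form together with all its conjugates (a statement at the depth of Newton--Thorne) or decompose the eigenvalues over a $\QQ$-basis of the Hecke field. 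The paper avoids both issues by working $\ell$-adically rather than archimedeanly: Ribet's big-image theorem plus Chebotarev shows the vector of (unnormalized, integral) Hecke eigenvalues is dense in an open subset of $\prod_j (H_{f_j}\otimes\QQ_\ell)$, and a dense subset of an open $\ell$-adic set cannot lie in a finite union of hyperplanes (Lemmas \ref{denseopen}, \ref{notcontain}, \ref{mixed}). To complete your argument you would need either to carry out the equidistribution route with the weight-mismatch and conjugacy issues resolved, or to substitute an input of this Ribet--Chebotarev type.
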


This applies to  study the number of representations of an integer by a quadratic form.
Let $Q(x_1, \ldots, x_d)$  be a positive definite quadratic form over $\mathbb{Z}$ in $d$ variables with level one.
For a positive integer $n$, let
\[
r_{Q}(n):=\left|\{ (x_1, \ldots, x_d) \in \mathbb{Z}^d \; | \; Q(x_1, \ldots, x_d)=n \}\right|
\]
be the number of representations of the integer $n$ by a quadratic form $Q$.
Note  that 
\[
1+\sum_{n=1}^{\infty}r_{Q}(n)q^n
\]
is a modular form of weight $d/2$ on $\mathrm{SL}_2(\mathbb{Z})$.
Here, $q$ denotes $e^{2 \pi i z}$, where $z$ is a complex number whose imaginary part is positive.
Therefore, Theorem \ref{main1} gives the following corollary.

\begin{cor}
Assume that $Q_1(x_1, \ldots, x_d)$ and $Q_2(x_1, \ldots, x_d)$ are positive definite quadratic forms over $\mathbb{Z}$ in $d$ variables with level one.
If  there is a positive integer $n$ such that the numbers of representations of $n$ by
$Q_1$ and $Q_2$ are different, then the number of elements of the set
\[
R(Q_1,Q_2):=\left\{ x \in \mathbb{P}^1(\CC)\ |\ x = [r_{Q_1}(p) : r_{Q_2}(p)]\ \text{for some prime $p$}\right\}
\]
is infinite.
\end{cor}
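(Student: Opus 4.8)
The plan is to deduce the corollary from Theorem~\ref{main1} by contraposition. Write $\theta_{Q_i}(z) := 1 + \sum_{n=1}^{\infty} r_{Q_i}(n) q^n$ for $i = 1, 2$. As recalled above, each $\theta_{Q_i}$ is a holomorphic modular form of weight $d/2$ on $\SL_2(\ZZ)$; since a positive definite level-one (even unimodular) lattice has rank divisible by $8$, the weight $d/2$ is a multiple of $4$, in particular an even integer, so that $\theta_{Q_1}, \theta_{Q_2} \in M^!$. Both forms are nonzero because their constant terms equal $1$. By the definition \eqref{ratioset}, the $p$th Fourier coefficient of $\theta_{Q_i}$ is exactly $r_{Q_i}(p)$, whence $R(Q_1, Q_2) = R(\theta_{Q_1}, \theta_{Q_2})$.

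Suppose, contrary to the desired conclusion, that $R(Q_1, Q_2)$ is a finite set. Then Theorem~\ref{main1}, applied to the nonzero forms $\theta_{Q_1}$ and $\theta_{Q_2}$ in $M^!$, yields a constant $c$ with $\theta_{Q_1} = c\, \theta_{Q_2}$. Comparing constant terms gives $1 = c \cdot 1$, hence $c = 1$ and $\theta_{Q_1} = \theta_{Q_2}$. Comparing the remaining Fourier coefficients then forces $r_{Q_1}(n) = r_{Q_2}(n)$ for every positive integer $n$, contradicting the hypothesis that these representation numbers differ for some $n$. Therefore $R(Q_1, Q_2)$ must be infinite.

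The substantive input is entirely Theorem~\ref{main1}; the only points that require care are bookkeeping ones, and neither presents a genuine obstacle. First, one must confirm that $\theta_{Q_1}$ and $\theta_{Q_2}$ really lie in the space $M^!$ to which the theorem applies, i.e.\ that their common weight $d/2$ is an even integer, which is exactly where the level-one hypothesis enters. Second, the normalization $r_{Q_i}(0) = 1$ is what pins the scalar down to $c = 1$ rather than an arbitrary constant, converting the proportionality $\theta_{Q_1} = c\,\theta_{Q_2}$ into an honest equality of all Fourier coefficients.
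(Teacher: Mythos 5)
Your proof is correct and follows exactly the route the paper intends (the paper gives no separate proof, simply noting that the theta series is a modular form of weight $d/2$ on $\SL_2(\ZZ)$ and invoking Theorem~\ref{main1}): identify $R(Q_1,Q_2)$ with $R(\theta_{Q_1},\theta_{Q_2})$, deduce $\theta_{Q_1}=c\,\theta_{Q_2}$, and pin down $c=1$ from the constant terms. Your added observation that the level-one hypothesis forces $8\mid d$, hence an even weight, is a worthwhile detail the paper leaves implicit.
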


The main ingredient of the proof of Theorem \ref{main1} is the result in \cite{Ri} on the Galois representations attached to Hecke eigenforms.
This is used to prove that if $f$ and $g$ are nonzero cusp forms on $\SL_2(\ZZ)$ and $R(f,g)$ is finite, then $f=cg$ for some constant $c$.
This is the main part of the proof of Theorem \ref{main1}.

\begin{rmk}
In the same way, the result can be extended to harmonic weak Maass forms.
In this case, if $f$ and $g$ are harmonic weak Maass forms whose shadows are cusp forms, then we need to look at the set
\[
R\left(f^+, g^+\right) := \left\{ x \in \mathbb{P}^1(\CC)\ |\ x = [a_{f^+}(p) : a_{g^+}(p)]\ \text{for some prime $p$}\right\},
\]
where $f^+$ (resp. $g^+$) denotes the holomorphic part of $f$ (resp. $g$) and $a_{f^+}(n)$ (resp. $a_{g^+}(n)$) denotes the $n$th Fourier coefficient of $f^+$ (resp. $g^+$).
\end{rmk}

The remainder of this paper is organized as follows.
In Section \ref{section2}, we review some preliminaries concerning the Fourier coefficients of weakly holomorphic modular forms and the Galois representations attached to Hecke eigenforms.
In Section \ref{section3}, we prove the main theorem for the case of cusp forms.
In Section \ref{section4}, we prove the main theorem: Theorem \ref{main1}.

\section{Preliminaries} \label{section2}
In this section, we review some basic material concerning the Fourier coefficients of weakly holomorphic modular forms and the Galois representations attached to Hecke eigenforms.

\subsection{Fourier coefficients of weakly holomorphic modular forms} \label{section2.1}
In this section, we review some results related to the asymptotic of the Fourier coefficients of weakly holomorphic modular forms based on \cite{Leh0}
and \cite{Ran}.

Let $f\in M^!_k$.
We write $k = 12o_k + k'$ with $o_k\in\ZZ$ and $k' \in \{0, 4, 6,8, 10, 14\}$.
Then, by the valence formula, we have
\begin{equation} \label{upperbound}
\mathrm{ord}_{\infty}(f) \leq o_k
\end{equation}
if $f$ is nonzero.
Moreover, for $m\geq -o_k$, there is a unique $f_{k,m} \in M^!_k$ such that
\begin{equation} \label{basis}
f_{k,m}(z) = q^{-m} + O(q^{o_k+1}).
\end{equation}
Then, $\{ f_{k,m}\ |\ m\geq-o_k\}$ forms a basis of $M^!_k$.
In \cite{DJ}, Duke and Jenkins studied various properties of this basis.

For positive integers $m,n$, and $c$, let
\[
A_{m,c}(n)  := \sum_{-d=0 \atop (c,d)=1}^{c-1} e^{\frac{2\pi i}{c}(nd-ma)},
\]
where $a$ is an integer such that $ad \equiv 1\ (\mathrm{mod}\ c)$.
We introduce the Bessel function of the first kind (for example, see \cite{Wat})
\begin{eqnarray*}
I_n(z) &:=& \sum_{t=0}^\infty \frac{(z/2)^{n+2t}}{t!\Gamma(n+t+1)}.
\end{eqnarray*}
Note that this Bessel function satisfies an asymptotic expansion
\[
I_n(z) \sim \frac{e^z}{\sqrt{2\pi z}}
\]
as $z\to\infty$.
Then, we have the following theorem.

\begin{thm} \label{weaklycoeff}
\label{poincare} \cite[Theorem 1--3]{Leh0}, \cite[pp. 149--151]{Ran}
For $m\geq -o_k$,
let $a_{k,m}(n)$ be the $n$th Fourier coefficient of $f_{k,m}$.
\begin{enumerate}
\item
If $m>0$, then
\[
a_{k,m}(n) \sim C_k A_{m,1}(n) \left(\frac{n}{m}\right)^{(k-1)/2}\frac{e^{4\pi \sqrt{mn}}}{(mn)^{1/4}}
\]
as $n\to\infty$, where $C_k$ is a constant dependent on $k$.

\item
If $m=0$, then
\[
a_{k,m}(n) \sim D_k n^{k-1}
\]
as $n\to\infty$, where $D_k$ is a constant dependent on $k$.
\end{enumerate}
\end{thm}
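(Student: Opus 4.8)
The plan is to recover the coefficients $a_{k,m}(n)$ by the circle method (the approach of Lehner and Rankin cited above), to produce from it an exact convergent series for $a_{k,m}(n)$ in terms of the exponential sums $A_{m,c}(n)$ and Bessel functions, and then to extract the leading term. For part (1), with $m>0$, I would begin from the integral representation
\[
a_{k,m}(n) = \int_{-1/2+i\delta}^{1/2+i\delta} f_{k,m}(z)\, e^{-2\pi i n z}\, dz,
\]
valid for every $\delta>0$, and let $\delta\to 0$ along a Farey dissection of $[0,1)$. On the arc surrounding a fraction $a/c$ I would apply the modular transformation $z\mapsto\gamma z=\frac{az+b}{cz+d}$, under which $f_{k,m}(\gamma z)=(cz+d)^k f_{k,m}(z)$, so that near $a/c$ the integrand is governed by the principal part $q^{-m}$ pulled back through $\gamma$. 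Summing over all fractions with a fixed denominator $c$ assembles the exponential sum $A_{m,c}(n)$, while the transcendental part of each arc integral becomes a Bessel integral.

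Carrying this out yields an exact formula of the shape
\[
a_{k,m}(n) = 2\pi \left( \frac{n}{m} \right)^{(k-1)/2} \sum_{c=1}^{\infty} \frac{A_{m,c}(n)}{c}\, I_{k-1}\!\left( \frac{4\pi \sqrt{mn}}{c} \right),
\]
up to a normalizing constant depending on $k$ (with the Bessel index $|k-1|$ when $k$ is negative). The term $c=1$ dominates: using the trivial bound $|A_{m,c}(n)|\le c$ together with $I_{k-1}(x)\sim e^{x}/\sqrt{2\pi x}$, each term with $c\ge 2$ is $O\!\left(e^{2\pi\sqrt{mn}}\right)$, which is negligible against the $c=1$ term of size $e^{4\pi\sqrt{mn}}$. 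Hence
\[
a_{k,m}(n) \sim 2\pi \left( \frac{n}{m}\right)^{(k-1)/2} A_{m,1}(n)\, I_{k-1}\!\left( 4\pi\sqrt{mn}\right),
\]
and substituting $I_{k-1}(x)\sim e^{x}/\sqrt{2\pi x}$ with $x=4\pi\sqrt{mn}$ produces the claimed asymptotic, with all $k$-dependent constants absorbed into $C_k$.

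For part (2), with $m=0$, the function $f_{k,0}=1+O(q^{o_k+1})$ has no principal part and therefore lies in the space $M_k$ of holomorphic modular forms. I would decompose $f_{k,0}=E+h$, where $E$ is the Eisenstein component (with nonzero constant term) and $h\in S_k$ is cuspidal. The Eisenstein coefficients are a fixed constant multiple of $\sigma_{k-1}(n)$ and so are of order $n^{k-1}$, while by the Ramanujan--Deligne bound the cuspidal coefficients satisfy $a_h(n)=O\!\left(n^{(k-1)/2+\varepsilon}\right)$. The Eisenstein part therefore dominates and yields $a_{k,0}(n)\sim D_k n^{k-1}$ with $D_k$ the Eisenstein normalizing constant.

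The main obstacle is the analytic core of part (1): making the circle-method estimates rigorous. One must justify the Farey dissection together with the interchange of the limit $\delta\to 0$ and the summation over arcs, control the minor-arc errors uniformly in $n$, and—since $k$ may be nonnegative, where the associated Poincaré series need not converge absolutely—obtain the exact Bessel series by analytic continuation or regularization before it is legitimate. Once that series is established, extracting the leading term is routine, resting only on the elementary bound for $A_{m,c}(n)$ and the classical asymptotics of $I_{k-1}$.
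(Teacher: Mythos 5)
The paper offers no proof of this statement: it is quoted verbatim from Lehner and Rankin, and your circle-method outline (Farey dissection, exact Rademacher-type expansion in $A_{m,c}(n)$ and $I$-Bessel functions, isolation of the $c=1$ term) is precisely the argument of those cited sources, so in substance you are reconstructing the intended proof rather than offering an alternative. Two small points to tighten: bounding a single term with $c\ge 2$ by $O(e^{2\pi\sqrt{mn}})$ is not enough, since there are infinitely many such terms --- you must sum the tail over all $c\ge 2$ (using $|A_{m,c}(n)|\le c$ and the small-argument behaviour $I_{\nu}(x)\ll x^{\nu}$ for $x=4\pi\sqrt{mn}/c$ with $c$ large), which still yields a bound of the form $e^{2\pi\sqrt{mn}}$ times a power of $n$ and hence is negligible. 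In part (2) your Eisenstein-plus-cusp decomposition gives $a_{k,0}(n)=c_k\sigma_{k-1}(n)+O(n^{k/2+\varepsilon})$, and since $\sigma_{k-1}(n)/n^{k-1}$ oscillates between $1$ and $\zeta(k-1)$ this establishes $a_{k,0}(n)\asymp n^{k-1}$ rather than a true asymptotic equivalence; this looseness is already present in the paper's statement and is harmless for the way the theorem is used in Section 4, but you should state it as $\asymp$ if you want the claim to be literally correct.
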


\subsection{Galois representations attached to Hecke eigenforms} \label{section2.2}
In this section, we introduce the result in \cite{Ri} concerning the Galois representations attached to Hecke eigenforms.
For a positive integer $n$, let $T_n$ be the $n$th Hecke operator.
For a positive even integer $k$, let $S_k$ denote the space of cusp forms of weight $k$ on $\SL_2(\ZZ)$.
Let $f(z) = \sum_{n=1}^\infty a_f(n)q^n \in S_k$ be a normalized Hecke eigenform, i.e. $f|T_p=a_f(p)f$ for every prime $p$ and $a_f(1) = 1$.
Let $E_f$ be the field generated by all the Hecke eigenvalues $a_f(p)$ over $\QQ$, and $H_f$ be the $\mathbb{Z}$-algebra generated by all the Hecke eigenvalues $a_f(p)$.
Let $G_{\QQ}$ denote $\mathrm{Gal}(\bar{\QQ}/\QQ)$.
For a prime $\ell$, let
\[
\rho_{f,\ell}:G_{\QQ} \rightarrow \mathrm{GL}_2\left(E_f \otimes \mathbb{Q}_{\ell}\right)
\]
be the representation of $G_{\QQ}$ attached to $f$.
Note that if $p$ is a prime not equal to $\ell$ and $\mathrm{Frob}_p \in G_{\QQ}$ is a Frobenius element at $p$, then the trace of $\rho_{f, \ell}(\mathrm{Frob}_p)$ is $a_f(p)$ in $E_f \otimes \QQ_{\ell}$ and the determinant of $\rho_{f, \ell}(\mathrm{Frob}_p)$ is $p^{k-1}$ (for example, see page 261 in \cite{Ri}).

Let $\mathcal{G}_{f,\ell}$ be the image of $\rho_{f,\ell}$ in $\mathrm{GL}_2\left(E_f \otimes \mathbb{Q}_{\ell}\right)$. If
\[
\mathcal{A}_{f,\ell}:= \left\{ u \in \mathrm{GL}_2\left(E_f \otimes \mathbb{Q}_{\ell}\right) \; | \; \det(u) \in (\mathbb{Z}_{\ell}^{\times})^{k-1}  \right\},
\]
then $\mathcal{A}_{f,\ell}$ contains $\mathcal{G}_{f,\ell}$. Moreover, it was proved in \cite{Ri} and \cite{Sw} that for all but finitely many primes $\ell$, we have
\[
\mathcal{G}_{f,\ell}=\mathcal{A}_{f,\ell}.
\]

Let $f' \in S_{k'}$ be a normalized Hecke eigenform.
Suppose that if $k=k'$, then $f$ and $f'$ are not conjugate under the action of $G_{\QQ}$. 
Let $T_{f,f'}$ be the $\mathbb{Z}$-subalgebra of $H_f \times H_{f'}$ generated by the pairs $\left(a_f(p),a_{f'}(p)\right)$.
It should be noted that according to the assumption, $\left[H_f \times H_{f'}:T_{f,f'}\right]$ is finite (for example, see lines 8-10 on p.268 in \cite{Ri}).
Let $\mathcal{A}_{\rho_f \times \rho_{f'}}$ be the image of $\rho_{f,\ell} \times \rho_{f',\ell}$ in $ \mathrm{GL}_2\left(E_f \otimes \mathbb{Q}_{\ell}\right) \times  \mathrm{GL}_2\left(E_{f'} \otimes \mathbb{Q}_{\ell}\right)$.
Ribet \cite{Ri} proved the following theorem.

\begin{thm}[Theorem 6.1 in \cite{Ri}]\label{Ribet1}
If $\ell$ is a prime such that
\begin{itemize}
\item $\ell \geq k+k'$,
\item $\mathcal{G}_{f,\ell}=\mathcal{A}_{f,\ell}$,
\item $\mathcal{G}_{f',\ell}=\mathcal{A}_{f',\ell}$,
\item $\ell \nmid \left[H_f \times H_{f'}:T_{f,f'}\right]$,
\end{itemize}
then
\[
\mathcal{A}_{\rho_f \times \rho_{f'}}= \left\{ (u,u') \in \mathcal{A}_{f,\ell} \times \mathcal{A}_{f',\ell} \; \big| \; \det(u)=v^{k-1}, \; \det(u')=v^{k'-1} \text{ for some } v \in \mathbb{Z}_{\ell}^{\times} \right\}.
\]
\end{thm}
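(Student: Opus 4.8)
The statement describes the image of a product of two $\ell$-adic representations, and my plan is to combine the given maximality of each factor (that $\mathcal{G}_{f,\ell}=\mathcal{A}_{f,\ell}$ and $\mathcal{G}_{f',\ell}=\mathcal{A}_{f',\ell}$) with a Goursat-type analysis; the arithmetic heart is to exclude a congruence between $f$ and $f'$ modulo a prime above $\ell$. First I would record the determinant constraint coming from the cyclotomic character. Writing $\chi_\ell\colon G_{\QQ}\to\ZZ_\ell^\times$ for the $\ell$-adic cyclotomic character, the relation $\det\rho_{f,\ell}(\mathrm{Frob}_p)=p^{k-1}$ recalled in Section~\ref{section2.2} gives $\det\rho_{f,\ell}=\chi_\ell^{k-1}$ and $\det\rho_{f',\ell}=\chi_\ell^{k'-1}$. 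Hence each element $(\rho_{f,\ell}(g),\rho_{f',\ell}(g))$ of $\mathcal{A}_{\rho_f\times\rho_{f'}}$ has determinant pair $(v^{k-1},v^{k'-1})$ with common $v=\chi_\ell(g)\in\ZZ_\ell^\times$, so $\mathcal{A}_{\rho_f\times\rho_{f'}}$ is contained in the right-hand group, which I will call $\mathcal{H}$. As $\chi_\ell$ surjects onto $\ZZ_\ell^\times$, the image surjects onto $\mathcal{H}/(\SL_2(E_f\otimes\QQ_\ell)\times\SL_2(E_{f'}\otimes\QQ_\ell))$, the group of admissible determinant pairs; so it suffices to show $\mathcal{A}_{\rho_f\times\rho_{f'}}$ contains the full product $\SL_2(E_f\otimes\QQ_\ell)\times\SL_2(E_{f'}\otimes\QQ_\ell)$.

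To isolate this product I would pass to derived subgroups. Writing $E_f\otimes\QQ_\ell=\prod_\lambda E_{f,\lambda}$ as a product of local fields, each $\SL_2(E_{f,\lambda})$ is perfect and $\mathcal{A}_{f,\ell}$ has abelian image under $\det$, so $[\mathcal{A}_{f,\ell},\mathcal{A}_{f,\ell}]=\SL_2(E_f\otimes\QQ_\ell)$ and similarly for $f'$. Since $\mathcal{G}_{f,\ell}=\mathcal{A}_{f,\ell}$ and $\mathcal{G}_{f',\ell}=\mathcal{A}_{f',\ell}$, the subgroup $S:=[\mathcal{A}_{\rho_f\times\rho_{f'}},\mathcal{A}_{\rho_f\times\rho_{f'}}]$ is contained in $\SL_2(E_f\otimes\QQ_\ell)\times\SL_2(E_{f'}\otimes\QQ_\ell)$ and projects onto each factor. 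I would then apply Goursat's lemma to $S$: if it were a proper subgroup, there would be a nontrivial common quotient of the two factors realized inside $S$.

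The crux is to rule out such a common quotient, and I expect this to be the main obstacle. I would first reduce to the residual representations: because $\ell\geq k+k'\geq 5$, each $\SL_2(\mathbb{F}_\lambda)$ is perfect, so a Frattini argument shows that a closed subgroup of $\prod_\lambda\SL_2(\mathcal{O}_{E_{f,\lambda}})\times\prod_{\lambda'}\SL_2(\mathcal{O}_{E_{f',\lambda'}})$ surjecting onto the product of the residual groups must be everything; hence it is enough to prove fullness for the reductions $\bar\rho_{f,\lambda}$, $\bar\rho_{f',\lambda'}$. For the finite groups, the normal subgroups of $\SL_2(\mathbb{F}_\lambda)$ are central or the whole group because $\PSL_2(\mathbb{F}_\lambda)$ is simple, so a common Goursat quotient would force an isomorphism $\PSL_2(\mathbb{F}_\lambda)\cong\PSL_2(\mathbb{F}_{\lambda'})$ compatible with both Galois actions, i.e.\ a projective equivalence of $\bar\rho_{f,\lambda}$ and $\bar\rho_{f',\lambda'}$ up to a field automorphism. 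Comparing determinants, this equivalence is a twist relation and hence yields a congruence $a_f(p)\equiv a_{f'}(p)\pmod{\lambda}$, up to conjugation and a power of $p$, for almost all primes $p$.

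Finally I would exclude this congruence, and this is exactly where the remaining hypotheses enter. A systematic congruence between the eigenvalue systems of $f$ and $f'$ modulo a prime above $\ell$ forces the image of $T_{f,f'}$ in $(H_f\times H_{f'})\otimes\mathbb{F}_\ell$ to be proper, hence $\ell\mid[H_f\times H_{f'}:T_{f,f'}]$; the hypothesis $\ell\nmid[H_f\times H_{f'}:T_{f,f'}]$ rules this out. When $k=k'$ the twisting character must be quadratic and unramified, hence trivial, so the congruence would make $f$ and $f'$ Galois conjugate, contrary to assumption; when $k\neq k'$ the mismatch of Hodge--Tate weights (equivalently, of the power of $p$ in the trace relation) is already impossible. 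The resulting contradiction shows $S$ equals the full product, which with the first paragraph gives $\mathcal{A}_{\rho_f\times\rho_{f'}}=\mathcal{H}$. The bound $\ell\geq k+k'$ plays the supporting role of guaranteeing that the residual images are individually full and that each $\PSL_2(\mathbb{F}_\lambda)$ is simple.
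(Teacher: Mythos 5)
This theorem is not proved in the paper at all: it is imported verbatim as Theorem~6.1 of Ribet's 1975 paper \cite{Ri}, so there is no in-paper argument to compare yours against. Your outline does track the strategy of Ribet's original proof --- use the cyclotomic character to reduce to showing the image contains $\SL_2(E_f\otimes\QQ_\ell)\times\SL_2(E_{f'}\otimes\QQ_\ell)$, run a Goursat-type analysis, and exclude a correlation between the two factors via the hypothesis $\ell\nmid[H_f\times H_{f'}:T_{f,f'}]$ --- and this is also the mechanism the paper itself reuses in Lemma~\ref{multi} and Lemma~\ref{denseopen}.

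As a proof, though, two steps are asserted in a form that is not correct as written. First, the Frattini-type claim that a closed subgroup of $\prod_\lambda\SL_2(\mathcal{O}_\lambda)\times\prod_{\lambda'}\SL_2(\mathcal{O}_{\lambda'})$ surjecting onto the product of the residual groups must be everything fails when some completion is ramified over $\ZZ_\ell$: for example $\SL_2(\ZZ_\ell)$ sits inside $\SL_2(\ZZ_\ell[\sqrt{\ell}])$ and surjects onto $\SL_2(\mathbb{F}_\ell)$ without being the whole group. One must instead argue with Lie algebras (the route behind Lemma~\ref{multi}) or with reductions modulo $\lambda^2$. Second, the twisting character produced by a projective equivalence of the residual representations is a priori an arbitrary power of the mod-$\ell$ cyclotomic character, possibly ramified at $\ell$; your claim that it ``must be quadratic and unramified, hence trivial'' skips the actual content, which is that $\ell\geq k+k'$ together with the local behaviour at $\ell$ (the theory of mod-$\ell$ modular forms and the $\theta$-operator) kills the twist. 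Only after the twist is eliminated does the congruence $a_{f'}(p)\equiv\sigma(a_f(p))$ constrain the image of $T_{f,f'}$ in $(H_f\times H_{f'})\otimes\mathbb{F}_\ell$ and contradict $\ell\nmid[H_f\times H_{f'}:T_{f,f'}]$; with a nontrivial power of $p$ present, the pairs $(a_f(p),a_{f'}(p))$ need not lie in a proper subring. Since the paper treats the statement as a black box, the appropriate course is simply to cite \cite{Ri}; if you do want a self-contained proof, these are the two places requiring real work.
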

This theorem implies the following lemma.

\begin{lem}\label{denseopen}
For integers $j$, $1 \leq j \leq m$, suppose that $f_j$ are normalized Hecke eigenforms in $S_{k_j}$ that are not conjugate to each other under the action of $G_{\QQ}$.
If $\ell$ is a sufficiently large prime, then $H_{f_1} \times \cdots \times H_{f_m} $ is dense in an open subset of $\left(H_{f_1} \otimes \QQ_{\ell}\right) \times \cdots \times \left(H_{f_m} \otimes \QQ_{\ell}\right)$.
\end{lem}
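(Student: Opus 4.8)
The plan is to translate the statement about Hecke eigenvalues into one about Frobenius traces and then to combine a multi-form version of Theorem \ref{Ribet1} with the Chebotarev density theorem. Set $\rho:=\rho_{f_1,\ell}\times\cdots\times\rho_{f_m,\ell}$ and write $\tau(u_1,\dots,u_m)=(\tr(u_1),\dots,\tr(u_m))$, so that $\tau(\rho(\mathrm{Frob}_p))=(a_{f_1}(p),\dots,a_{f_m}(p))$ for every unramified prime $p$. Since each $H_{f_j}$ is a $\ZZ$-order in $E_{f_j}$, we have $H_{f_j}\otimes\QQ_\ell=E_{f_j}\otimes\QQ_\ell$, and the substantive content is that the Frobenius-trace tuples $\{(a_{f_1}(p),\dots,a_{f_m}(p))\}_p$ lie densely in an open subset of $\prod_{j=1}^m(H_{f_j}\otimes\QQ_\ell)$; the $\ZZ$-order generated by these tuples then has $\ell$-adic closure containing an open set, which yields the asserted density of $H_{f_1}\times\cdots\times H_{f_m}$.

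First I would extend Theorem \ref{Ribet1} from two eigenforms to the family $f_1,\dots,f_m$, showing that for all sufficiently large $\ell$ the image $\mathcal{A}_{\rho_{f_1}\times\cdots\times\rho_{f_m}}$ of $\rho$ equals the fiber product
\[
\left\{(u_1,\dots,u_m)\in\mathcal{A}_{f_1,\ell}\times\cdots\times\mathcal{A}_{f_m,\ell}\ \Big|\ \det(u_j)=v^{k_j-1}\ \text{for all } j,\ \text{for some } v\in\ZZ_\ell^\times\right\}.
\]
This I would carry out by induction on $m$ in Goursat/commutator style: assuming the description for $f_1,\dots,f_{m-1}$, I apply Theorem \ref{Ribet1} to control the fibered position of the last factor $\rho_{f_m,\ell}$ relative to the product of the others, the hypothesis that no two of the $f_j$ are $G_\QQ$-conjugate being exactly what forces the $\SL_2$-parts of the factors to be mutually independent. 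The point to verify is that the four hypotheses of Theorem \ref{Ribet1} hold \emph{simultaneously and for all large} $\ell$: the bound $\ell\ge k_i+k_j$ is eventually satisfied, $\mathcal{G}_{f_j,\ell}=\mathcal{A}_{f_j,\ell}$ holds for all but finitely many $\ell$ by \cite{Ri,Sw}, and $\ell\nmid[H_{f_i}\times H_{f_j}:T_{f_i,f_j}]$ holds for large $\ell$ because each index is finite. Establishing this multi-form independence — that the only correlation among the factors is the common determinant map $v\mapsto(v^{k_1-1},\dots,v^{k_m-1})$ coming from the cyclotomic character — is the main obstacle.

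Granting the fiber-product description, $\mathcal{A}_{\rho_{f_1}\times\cdots\times\rho_{f_m}}$ is open in $\prod_j\mathcal{A}_{f_j,\ell}$, since its defining conditions constrain only the determinants. The trace map $\tau$ then carries it onto a set containing an open subset of $\prod_j(E_{f_j}\otimes\QQ_\ell)$: fixing any admissible $v$ and letting each $u_j$ range over the $2\times2$ matrices of determinant $v^{k_j-1}$, the value $\tr(u_j)$ sweeps out a full open neighborhood in $E_{f_j}\otimes\QQ_\ell$ independently in each coordinate, because the characteristic polynomial $X^2-\tr(u_j)X+\det(u_j)$ imposes no constraint on the trace once the determinant is fixed. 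Finally, by the Chebotarev density theorem the Frobenius classes $\{\rho(\mathrm{Frob}_p)\}_p$ are dense (indeed equidistributed) in the image $\mathcal{A}_{\rho_{f_1}\times\cdots\times\rho_{f_m}}$; applying the continuous map $\tau$ shows that $\{(a_{f_1}(p),\dots,a_{f_m}(p))\}_p$ is dense in $\tau(\mathcal{A}_{\rho_{f_1}\times\cdots\times\rho_{f_m}})$, hence in the open subset produced above. Since $H_{f_1}\times\cdots\times H_{f_m}$ is a full lattice whose $\ell$-adic closure is open and coincides, for $\ell$ prime to the relevant index, with that of the order generated by these trace tuples, the lemma follows.
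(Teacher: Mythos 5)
Your overall strategy --- show the $\ell$-adic Galois image is large, apply Chebotarev, and push forward along the trace map --- is the same as the paper's, and the final steps are sound: the trace map on $2\times 2$ matrices of prescribed determinant is onto an open set (companion matrices), and Chebotarev makes the Frobenius trace tuples dense in the trace image. The gap sits exactly where you flag ``the main obstacle'': the passage from the pairwise statement of Theorem \ref{Ribet1} to $m$-fold independence. Your proposed induction does not go through as described, because Theorem \ref{Ribet1} is a statement about \emph{two} eigenforms; at the inductive step you would need to control the position of $\rho_{f_m,\ell}$ relative to the product $\rho_{f_1,\ell}\times\cdots\times\rho_{f_{m-1},\ell}$, and no version of Theorem \ref{Ribet1} with one factor replaced by a product is supplied. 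A Goursat argument at that step must rule out common quotients of $\mathrm{SL}_2(H_{f_m}\otimes\ZZ_{\ell})$ and the product of the earlier $\mathrm{SL}_2$'s that are invisible to the pairwise projections; a priori, quotients could conspire across three or more factors even when every pair is independent. As written, the central claim is asserted, not proved.

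The paper closes precisely this gap with Lemma \ref{multi} (Ribet's Lemma 3.4). It sets $\mathcal{U}=\mathrm{SL}_2(H_{f_1}\otimes\ZZ_{\ell})\times\cdots\times\mathrm{SL}_2(H_{f_m}\otimes\ZZ_{\ell})$ and $\mathcal{H}=\mathcal{U}\cap\mathcal{G}$, where $\mathcal{G}$ is the image of $\rho_{f_1,\ell}\times\cdots\times\rho_{f_m,\ell}$; Theorem \ref{Ribet1} (whose four hypotheses hold simultaneously for all large $\ell$, as you correctly note) gives surjectivity of the projection of $\mathcal{H}$ onto each pairwise product of $\mathrm{SL}_2$'s, and the hypothesis of Lemma \ref{multi} --- that for every open subgroup $\mathcal{W}$ of $\mathrm{SL}_2(\mathcal{O}_v)$ the closure of its commutator subgroup is open --- holds because $\mathrm{sl}_2(\mathcal{O}_v)$ equals its own derived algebra for $\ell\neq 2$. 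That commutator condition is exactly what excludes the conspiring (necessarily abelian) quotients, and Lemma \ref{multi} then makes $\mathcal{H}$ open in $\mathcal{U}$, which is all that is needed before applying your trace-map and Chebotarev steps. So your outline is salvageable, but you must either invoke Lemma \ref{multi} or reprove its content; the induction ``in Goursat/commutator style'' is not a substitute until that group-theoretic input is actually carried out.
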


To prove this lemma, we need the following result.

\begin{lem}[Lemma 3.4 in \cite{Ri}]\label{multi}
Let $\mathcal{U}_1,\ldots,\mathcal{U}_t \; (t>1)$ be profinite groups.
Assume that for each $i$ the following condition is satisfied: for each open subgroup $\mathcal{W}$ of $\mathcal{U}_i$, the closure of the commutator subgroup of $\mathcal{W}$ is open in  $\mathcal{U}_i$. Let $\mathcal{H}$ be a closed subgroup of
\[
\mathcal{U}= \mathcal{U}_1 \times \cdots \times \mathcal{U}_t,
\]
which maps to an open subgroup of each group  $\mathcal{U}_i\times\mathcal{U}_j \; (i \neq j)$.
Then, $\mathcal{H}$ is open in $\mathcal{U}$.
\end{lem}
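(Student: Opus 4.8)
The plan is to prove the statement by induction on the number $t$ of factors. The base case $t=2$ is immediate: the only pair is $(1,2)$ and the map $\mathcal{H}\to\mathcal{U}_1\times\mathcal{U}_2$ is the inclusion, so the hypothesis asserts directly that $\mathcal{H}$ is open. For the inductive step I would let $p\colon\mathcal{U}\to\mathcal{U}_1\times\cdots\times\mathcal{U}_{t-1}$ be the projection forgetting the last coordinate. Its image $p(\mathcal{H})$ inherits the pairwise-openness hypotheses for the first $t-1$ factors, so by induction $p(\mathcal{H})$ is open in $\mathcal{U}_1\times\cdots\times\mathcal{U}_{t-1}$. A short index computation then concentrates everything at one point: writing $\mathcal{N}:=\{y\in\mathcal{U}_t\mid(1,\ldots,1,y)\in\mathcal{H}\}$, one has $[\mathcal{U}:\mathcal{H}]=[\,\mathcal{U}_1\times\cdots\times\mathcal{U}_{t-1}:p(\mathcal{H})\,]\cdot[\mathcal{U}_t:\mathcal{N}]$, so it suffices to show that $\mathcal{N}$ is open in $\mathcal{U}_t$.

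To analyze $\mathcal{N}$ I would apply Goursat's lemma to $\mathcal{H}$ viewed inside $p(\mathcal{H})\times q(\mathcal{H})$, where $q$ is the projection onto $\mathcal{U}_t$ (here $q(\mathcal{H})$ is open in $\mathcal{U}_t$ since $\mathcal{H}$ surjects onto an open subgroup of, say, $\mathcal{U}_1\times\mathcal{U}_t$). This produces a common quotient $\mathcal{Q}\cong q(\mathcal{H})/\mathcal{N}$, and $\mathcal{N}$ is open in $\mathcal{U}_t$ exactly when $\mathcal{Q}$ is finite; the goal thus becomes to show $\mathcal{Q}$ finite. Two features of $\mathcal{Q}$ drive the argument. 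First, $\mathcal{Q}$ is a quotient of the open subgroup $q(\mathcal{H})$ of $\mathcal{U}_t$; pulling back an open subgroup of $\mathcal{Q}$ to an open subgroup of $\mathcal{U}_t$ and invoking the hypothesis that its commutator closure is open, one sees that every open subgroup of $\mathcal{Q}$ has finite abelianization. Second, $\mathcal{Q}$ is also a quotient of $p(\mathcal{H})$, and through the product structure of $\mathcal{U}_1\times\cdots\times\mathcal{U}_{t-1}$ it is topologically generated by the pairwise-commuting images $\mathcal{Q}_1,\ldots,\mathcal{Q}_{t-1}$ of the factors; unwinding the Goursat correspondence, the hypothesis that each $\mathcal{H}\to\mathcal{U}_i\times\mathcal{U}_t$ has open image becomes the statement that each partial product $\mathcal{Q}_{\hat\imath}:=\overline{\prod_{j\neq i}\mathcal{Q}_j}$ is open in $\mathcal{Q}$.

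The finishing move is then purely group-theoretic. I would set $\mathcal{Z}:=\bigcap_{i=1}^{t-1}\mathcal{Q}_{\hat\imath}$, a finite intersection of open subgroups, hence open in $\mathcal{Q}$. For each $k$ one has $\mathcal{Z}\subseteq\mathcal{Q}_{\hat k}=\overline{\prod_{j\neq k}\mathcal{Q}_j}$, which commutes with $\mathcal{Q}_k$; letting $k$ range over $1,\ldots,t-1$ shows that $\mathcal{Z}$ commutes with every generator and so is central, in particular abelian. Applying the inherited commutator property to $\mathcal{W}=\mathcal{Z}$ forces $\overline{[\mathcal{Z},\mathcal{Z}]}=\{1\}$ to be open in $\mathcal{Q}$; thus $\{1\}$ is open, $\mathcal{Q}$ is discrete, and being profinite it is finite. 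Unwinding, $\mathcal{N}$ is open in $\mathcal{U}_t$ and therefore $\mathcal{H}$ is open, completing the induction.

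I expect the main obstacle to be the bookkeeping of the second paragraph rather than the clean finish of the third. The subtlety is that $p(\mathcal{H})$ is only open in $\mathcal{U}_1\times\cdots\times\mathcal{U}_{t-1}$, not literally a sub-product, so the commuting generators $\mathcal{Q}_i$ and the openness of the partial products $\mathcal{Q}_{\hat\imath}$ are not immediately available. I would remedy this by replacing $p(\mathcal{H})$ with the finite-index open sub-product $\prod_{i}\bigl(p(\mathcal{H})\cap\mathcal{U}_i\bigr)$, passing to the corresponding open finite-index subgroup of $\mathcal{Q}$ (whose finiteness is equivalent to that of $\mathcal{Q}$), and checking that openness of each image $\mathcal{H}\to\mathcal{U}_i\times\mathcal{U}_t$ indeed forces the common quotient $\mathcal{Q}/\mathcal{Q}_{\hat\imath}$ to be finite. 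Verifying that openness and finiteness of index survive these projections and Goursat identifications is the technical heart of the argument.
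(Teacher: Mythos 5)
The paper contains no proof of this statement to compare against: the authors quote it verbatim as Lemma 3.4 of \cite{Ri} and use it as a black box. Measured against Ribet's original argument, your reconstruction is correct and follows the same overall strategy: induction on $t$, a Goursat analysis of the two-block splitting $(\mathcal{U}_1\times\cdots\times\mathcal{U}_{t-1})\times\mathcal{U}_t$, and the commutator hypothesis used to force the common Goursat quotient to be finite.

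The skeleton checks out: $p(\mathcal{H})$ is compact, hence closed, and inherits the pairwise hypotheses, so induction applies; the identity $[\mathcal{U}:\mathcal{H}]=[\mathcal{U}_1\times\cdots\times\mathcal{U}_{t-1}:p(\mathcal{H})]\cdot[\mathcal{U}_t:\mathcal{N}]$, together with the fact that a closed finite-index subgroup of a profinite group is open, reduces everything to $\mathcal{N}$; and $q(\mathcal{H})$ is open in $\mathcal{U}_t$ because it is the projection of the open image of $\mathcal{H}$ in $\mathcal{U}_1\times\mathcal{U}_t$. The bookkeeping you flag as the technical heart does go through, for instance as follows. Openness of the image of $\mathcal{H}$ in $\mathcal{U}_i\times\mathcal{U}_t$ yields open subgroups $\mathcal{V}_i\le\mathcal{U}_i$ and $\mathcal{W}_i\le\mathcal{U}_t$ with $\mathcal{V}_i\times\mathcal{W}_i$ inside that image; so for each $w\in\mathcal{W}_i$ there is $h\in\mathcal{H}$ with $i$th coordinate $1$ and $t$th coordinate $w$, giving $\psi(\mathcal{W}_i)\subseteq\varphi(\mathcal{A}_i)$, where $\varphi,\psi$ are the two Goursat maps onto $\mathcal{Q}$ and $\mathcal{A}_i:=p(\mathcal{H})\cap\prod_{j\neq i}\mathcal{U}_j$. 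Since $\mathcal{A}_i\supseteq\prod_{j\neq i}\mathcal{V}_j$, it is open in $\prod_{j\neq i}\mathcal{U}_j$ and contains $\prod_{j\neq i}\bigl(p(\mathcal{H})\cap\mathcal{U}_j\bigr)$ with finite index; finite index survives both $\varphi$ and taking closures, so $\mathcal{Q}_{\hat\imath}$ is closed of finite index in $\overline{\varphi(\mathcal{A}_i)}$, which contains the open subgroup $\psi(\mathcal{W}_i)$, hence $\mathcal{Q}_{\hat\imath}$ is open. Note that your proposed replacement of $p(\mathcal{H})$ by the sub-product $\prod_i\bigl(p(\mathcal{H})\cap\mathcal{U}_i\bigr)$ is harmless but not actually needed: $\mathcal{Z}=\bigcap_i\mathcal{Q}_{\hat\imath}$ lies in the closed subgroup generated by the $\mathcal{Q}_j$ and centralizes each $\mathcal{Q}_k$ (centralizers are closed, so commutation passes to closures), which already makes $\mathcal{Z}$ abelian without any generation statement for all of $\mathcal{Q}$. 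The finish is as you say: the commutator property descends to $\mathcal{Q}$ because the quotient map $q(\mathcal{H})\to\mathcal{Q}$ is open and $q(\mathcal{H})$ is open in $\mathcal{U}_t$, so $\overline{[\mathcal{Z},\mathcal{Z}]}=\{1\}$ is open, $\mathcal{Q}$ is discrete and compact, hence finite, $\mathcal{N}$ is open, and $\mathcal{H}$, being closed of finite index, is open.
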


Now, we prove Lemma \ref{denseopen}.

\begin{proof}
Let $\mathcal{G}$ be the image of $\rho_{f_1, \ell} \times \cdots \times \rho_{f_m,\ell}$ in $\mathrm{GL}_2\left(H_{f_1} \otimes \ZZ_{\ell}\right) \times \cdots \times \mathrm{GL}_2\left(H_{f_m} \otimes \ZZ_{\ell}\right)$. We claim that $\mathcal{G}$ contains  $\mathrm{SL}_2\left(H_{f_1} \otimes \ZZ_{\ell}\right) \times \cdots \times \mathrm{SL}_2\left(H_{f_m} \otimes \ZZ_{\ell}\right)$, which then provides the proof.
Now, we prove the claim.
Note that if $\ell$ is a sufficiently large prime, then, for all pairs $\left(j_1,j_2\right)$, the prime $\ell$ satisfies the following conditions:
\begin{itemize}
\item $\mathcal{G}_{f_{j_1},\ell}=\mathcal{A}_{f_{j_1},\ell}$,
\item $\mathcal{G}_{f_{j_2},\ell}=\mathcal{A}_{f_{j_2},\ell}$,
\item $\ell \nmid \left[H_{f_{j_1}} \times H_{f_{j_2}}:T_{f_{j_1}, f_{j_2}} \right]$.
\end{itemize}
Therefore, we assume that $\ell$ satisfies these conditions.

Let
\[
\mathcal{U}:=\mathrm{SL}_2\left(H_{f_1} \otimes \ZZ_{\ell}\right) \times \cdots \times \mathrm{SL}_2\left(H_{f_m} \otimes \ZZ_{\ell}\right)
\]
and
\[
\mathcal{H}:=\mathcal{U} \cap \mathcal{G}.
\]
Note that $\mathcal{H}$ is closed in $\mathrm{GL}_2\left(H_{f_1} \otimes \QQ_{\ell}\right) \times \cdots \times \mathrm{GL}_2\left(H_{f_m} \otimes \QQ_{\ell}\right)$ since $G_{\QQ}$ is compact and  $\rho_{f_1,\ell} \times \cdots \times \rho_{f_m,\ell}$ is continuous.
Thus, we see that $\mathcal{H}$ is also closed in $\mathcal{U}$.
For all pairs $(j_1,j_2)$, the projection of $\mathcal{H}$ to $\mathrm{SL}_2\left(H_{f_{j_1}} \otimes \ZZ_{\ell}\right) \times \mathrm{SL}_2\left(H_{f_{j_2}} \otimes \ZZ_{\ell}\right)$ is surjective by Theorem \ref{Ribet1}.
For each $j$, we have
\[
\mathrm{SL}_2(H_{f_{j}} \otimes \ZZ_{\ell}) \cong \prod_{v | \ell} \mathrm{SL}_2(\mathcal{O}_v),
\]
where  $v$ denote the prime ideals of $H_{f_{j}}$ above $\ell$, and $\mathcal{O}_v$ denotes the completion of $H_{f_{j}}$ at $v$.
Note that $\mathrm{SL}_2(\mathcal{O}_v)$ is a $\ell$-adic lie group, and the lie algebra of $\mathrm{SL}_2(\mathcal{O}_v)$ is the same as its own derived algebra.
This implies that for each open subgroup $\mathcal{W}$ of $\mathrm{SL}_2(H_{f_{j}} \otimes \ZZ_{\ell})$, the closure of the commutator subgroup of $\mathcal{W}$ is open in  $\mathrm{SL}_2(H_{f_{j}} \otimes \ZZ_{\ell})$ (see Remark 3 on p. 253 in \cite{Ri}).
Therefore, by Lemma \ref{multi}, we complete the proof of the claim.
\end{proof}

\begin{rmk}
For the convenience of readers, let us recall the lie algebra of $\mathrm{SL}_2(\mathcal{O}_v)$ and its derived subalgebra.
The lie algebra of $\mathrm{SL}_2(\mathcal{O}_v)$ is isomorphic to
\[
\mathrm{sl}_2(\mathcal{O}_v):=\left \{ \left(
                                                                                     \begin{smallmatrix}
                                                                                       a & b \\
                                                                                       c & d \\
                                                                                     \end{smallmatrix}
                                                                                      \right)
\; |\ a,b,c,d \in \mathcal{O}_v \text{ and } a+c=0
\right \}.
\]
The derived subalgebra $\mathrm{Der}(\mathrm{sl}_2(\mathcal{O}_v))$ of $\mathrm{sl}_2(\mathcal{O}_v)$ is generated by all $[A,B] \; (A,B \in \mathrm{sl}_2(\mathcal{O}_v))$,
where $[A,B]=AB-BA$.
Note that
\[
   \left(
   \begin{smallmatrix}
    1 & 0 \\
    0 & -1 \\
   \end{smallmatrix}
\right)
=\left[\left(
   \begin{smallmatrix}
    0 & 1 \\
    0 & 0 \\
   \end{smallmatrix}
\right),
\left(
   \begin{smallmatrix}
    0 & 1 \\
    1 & 0 \\
   \end{smallmatrix}
\right) \right],
\]
\[
\left(
   \begin{smallmatrix}
    0 & 2 \\
    0 & 0 \\
   \end{smallmatrix}
\right)
=\left[\left(
   \begin{smallmatrix}
    0 & 1 \\
    0 & 0 \\
   \end{smallmatrix}
\right),
\left(
   \begin{smallmatrix}
   -1 & 0 \\
   0 & 1 \\
   \end{smallmatrix}
\right) \right],
\]
and
\[
   \left(
   \begin{smallmatrix}
    0 & 0 \\
    2 & 0 \\
   \end{smallmatrix}
\right)
=\left[\left(
   \begin{smallmatrix}
    0 & 0 \\
    1 & 0 \\
   \end{smallmatrix}
\right),
\left(
   \begin{smallmatrix}
   1 & 0 \\
   0 & -1 \\
   \end{smallmatrix}
\right) \right].
\]
Assume that $\ell \neq 2$. The matrices $\left(
   \begin{smallmatrix}
    1 & 0 \\
    0 & -1 \\
   \end{smallmatrix}
\right)$, $\left(
   \begin{smallmatrix}
    0 & 2 \\
    0 & 0 \\
   \end{smallmatrix}
\right)$, and $\left(
   \begin{smallmatrix}
    0 & 0 \\
    2 & 0 \\
   \end{smallmatrix}
\right)$ consist a basis of $\mathrm{sl}_2(\mathcal{O}_v)$.
Therefore, we have
$$
\mathrm{Der}(\mathrm{sl}_2(\mathcal{O}_v))=\mathrm{sl}_2(\mathcal{O}_v).
$$
\end{rmk}

\subsection{Lemma for hyperplanes} \label{section2.3}
For later use, we prove the following lemma.

\begin{lem} \label{notcontain}
Let $U$ be a subset of $V = \QQ^n$.
Suppose that $U$ is dense in an open subset of $V\otimes \QQ_{\ell}$.
\begin{enumerate}
\item
If $T_1, \ldots, T_m$ are hyperplanes in $V$, then
\[
U \not\subset \cup T_i.
\]

\item
If $L_1, \ldots, L_m$ are hyperplanes in $V\otimes \CC$, then we have
\[
U \not \subset \cup L_i.
\]
\end{enumerate}
\end{lem}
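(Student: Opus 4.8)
The plan is to deduce both statements from two elementary facts about the $\ell$-adic topology on $V\otimes\QQ_\ell\cong\QQ_\ell^n$, together with a reduction of the complex case to the rational case. (Throughout I read the hypothesis as: there is a \emph{nonempty} open subset $\Omega\subseteq V\otimes\QQ_\ell$ with $\Omega\subseteq\overline{U}$; the statement is vacuous otherwise, and in the intended application of Lemma \ref{denseopen} the open set is indeed nonempty.) First I would record the topological inputs. Any affine $\QQ_\ell$-hyperplane $H=\{x\in\QQ_\ell^n\mid \psi(x)=c\}$, with $\psi$ a nonzero $\QQ_\ell$-linear form, is closed (as the preimage of a point under a continuous map) and has empty interior: if $H$ contained a nonempty open set, then by translation its direction space $H_0=\{\psi=0\}$ would contain an $\ell$-adic neighborhood of $0$, and since $\ell^N v\to 0$ for every $v$ we would get $v\in H_0$ for all $v$, forcing $H_0=\QQ_\ell^n$, a contradiction. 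Next, a finite union of closed nowhere dense sets is again closed and nowhere dense (for two such sets $A,B$, any nonempty open $W\subseteq A\cup B$ would have $W\setminus A$ open and contained in $B$, hence empty, so $W\subseteq A$; this contradicts $A$ having empty interior; the general case follows by induction). No appeal to Baire is needed for finite unions.

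For part (1), each rational hyperplane $T_i$ is the zero locus in $V$ of a nonzero rational affine-linear form $\phi_i$, and extending scalars gives an affine $\QQ_\ell$-hyperplane $T_i\otimes\QQ_\ell=\{\phi_i=0\}\subseteq\QQ_\ell^n$ that contains $T_i$. If $U\subset\bigcup_i T_i$, then $U\subseteq\bigcup_i (T_i\otimes\QQ_\ell)$, and since the right-hand side is closed we obtain $\overline{U}\subseteq\bigcup_i(T_i\otimes\QQ_\ell)$. But the hypothesis gives $\Omega\subseteq\overline{U}$ with $\Omega$ nonempty open, so $\Omega$ would be a nonempty open subset of a finite union of closed nowhere dense sets, contradicting the empty-interior fact above.

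For part (2), I would reduce to part (1) by replacing each complex hyperplane with a rational one that captures its rational points. Write $L_i=\{x\in V\otimes\CC\mid \psi_i(x)=c_i\}$ for a nonzero $\CC$-linear form $\psi_i$ and $c_i\in\CC$. The rational points $L_i\cap\QQ^n$ form the solution set in $\QQ^n$ of a single complex linear equation, whose associated homogeneous space $\{x\in\QQ^n\mid\psi_i(x)=0\}$ is the kernel of the $\QQ$-linear map $\QQ^n\to\CC$, $x\mapsto\psi_i(x)$ (viewing $\CC$ as a $\QQ$-vector space). This map is nonzero, so its kernel is a proper $\QQ$-subspace of $V$, of dimension at most $n-1$; hence $L_i\cap\QQ^n$ is either empty or a coset of this subspace, and in either case it is contained in some rational affine hyperplane $T_i\subseteq V$. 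Consequently $U\subseteq\bigcup_i L_i$ yields $U=\bigcup_i(U\cap L_i)\subseteq\bigcup_i T_i$, and part (1) produces the desired contradiction. I expect the only substantive point to be this reduction in part (2)---specifically the observation that the rational points on a complex hyperplane lie on a rational hyperplane---while the rest is routine $\ell$-adic topology.
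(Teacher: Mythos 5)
Your proof is correct and follows essentially the same route as the paper's: part (1) via the closures of the $T_i$ in $V\otimes\QQ_{\ell}$ together with the fact that a finite union of $\ell$-adic hyperplanes has empty interior, and part (2) by reducing to part (1) through the observation that $L_i\cap V$ is contained in a rational hyperplane. Your justification of that last observation --- that the kernel of the nonzero $\QQ$-linear map $x\mapsto\psi_i(x)$ into $\CC$ viewed as a $\QQ$-vector space is a proper subspace of $\QQ^n$ --- is a cleaner, coordinate-free version of the paper's explicit computation with a $\QQ$-basis $\alpha_1=1,\ldots,\alpha_{n-1}$ spanning the coefficients, and you also supply the empty-interior and finite-union details that the paper leaves implicit.
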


\begin{proof}
(1) Suppose that
\[
U \subset \cup T_i.
\]
This implies
\[
\overline{U} \subset \cup \overline{T_i}
\]
in $V\otimes \QQ_{\ell}$.
Then, $\cup \overline{T_i}$ contains an open set in $V\otimes \QQ_{\ell}$.
Note that $\overline{T_i}$ is a hyperplane in $V\otimes \QQ_{\ell}$ for each $i$.
This gives a contradiction.

(2) Due to (1), it is enough to prove that $L_i \cap V$ is contained in a hyperplane in $V$ for each $i$.
Note that $L_i$ can be expressed as
\[
\{(y_1, \ldots, y_n) \in V\otimes \CC|\     a_{i,1}y_1 + \cdots + a_{i,n}y_n = 0\}
\]
for $a_{i,j}\in\CC$.
Since $L_i$ is a hyperplane, we see that $(a_{i,1}, \ldots, a_{i,n}) \neq (0, \ldots, 0)$.
Without loss of generality, we may assume that $a_{i,n} \neq 0$.
Then, $(y_1, \ldots, y_n)\in L_i$ is equivalent to
\[
y_1 = t_1,\ \ldots,\ y_{n-1} = t_{n-1},\ y_n = -\frac{a_{i,1}}{a_{i,n}}t_1 - \cdots - \frac{a_{i,n-1}}{a_{i,n}}t_{n-1}
\]
for some $t_1, \ldots, t_{n-1}\in\CC$.
Therefore, $(x_1, \ldots, x_n) \in L_i \cap V$ is equivalent to
\begin{eqnarray*}
&&x_1 = t_1,\ \ldots,\ x_{n-1} = t_{n-1}\ \text{for some $t_1, \ldots, t_{n-1}\in\QQ$},\\
&&x_n = -\frac{a_{i,1}}{a_{i,n}}t_1 - \cdots - \frac{a_{i,n-1}}{a_{i,n}}t_{n-1} \in \QQ.
\end{eqnarray*}

We can  take $\QQ$-linearly independent complex numbers $\alpha_1 = 1, \ldots, \alpha_{n-1}$
such that
\[
\frac{a_{i,1}}{a_{i,n}}, \ldots, \frac{a_{i,n-1}}{a_{i,n}}
\]
can be expressed as $\QQ$-linear combinations of $\alpha_1, \ldots, \alpha_{n-1}$.
This implies that
\[
x_n = F_1(t_1, \ldots, t_{n-1}) + \sum_{i=2}^{n-1} \alpha_i F_i(t_1, \ldots, t_{n-1})
\]
for some degree $1$ homogeneous polynomials $F_i$ with coefficients in $\QQ$.
Therefore, $x_n\in \QQ$ is equivalent to $x_n = F_1(t_1, \ldots, t_{n-1})$.
From this, we see that $L_i \cap V$ is in the hyperplane
\[
\left\{ (x_1, \ldots, x_n)\in \QQ^n\ |\ x_n = F_1(x_1, \ldots, x_{n-1})\right\}.
\]
\end{proof}

\section{Coefficients of cusp forms} \label{section3}
In this section, we prove that if $f$ and $g$ are cusp forms and $R(f,g)$ is finite, then $f = cg$ for some constant $c$.
To prove this, we need the following lemmas.

\begin{lem}  \label{nonconjugate}
Suppose that $f_1, \ldots, f_m$ are normalized Hecke eigenforms on $\SL_2(\ZZ)$ such that any two of them are not conjugate under the action of $G_{\QQ}$.
Then, there are no finite subsets $B$ of $(E_{f_1}\otimes \CC) \times \cdots \times (E_{f_m} \otimes \CC)$ such that $(0, \ldots, 0) \not\in B$ and for any primes $p$, we have
\[
A_1 a_{f_1}(p) + \cdots + A_m a_{f_m}(p) = 0
\]
for some $(A_1, \ldots, A_m) \in B$.
\end{lem}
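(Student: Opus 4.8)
The plan is to reduce the statement to a non-containment-in-hyperplanes fact and then invoke the density result from the previous subsections. Suppose for contradiction that such a finite set $B$ exists; I aim to show this forces one of the $a_{f_j}(p)$ to satisfy infinitely many incompatible linear relations, which density forbids.

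First I would set up the ambient vector space. Let $V = H_{f_1} \times \cdots \times H_{f_m}$, viewed inside $(E_{f_1}\otimes\QQ_\ell)\times\cdots\times(E_{f_m}\otimes\QQ_\ell)$ after choosing a sufficiently large prime $\ell$ as in Lemma \ref{denseopen}. By that lemma, the set $U := \{(a_{f_1}(p),\ldots,a_{f_m}(p))\ |\ p\ \text{prime}\}$ — or rather the $\ZZ$-algebra it generates — is dense in an open subset of the $\ell$-adic space. The key point is that each element $(A_1,\ldots,A_m)\in B$ with $(A_1,\ldots,A_m)\neq(0,\ldots,0)$ defines a hyperplane
\[
L_{(A_1,\ldots,A_m)} := \left\{(y_1,\ldots,y_m)\ \big|\ A_1 y_1 + \cdots + A_m y_m = 0\right\}
\]
in the complexified space. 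The hypothesis that for every prime $p$ some member of $B$ annihilates $(a_{f_1}(p),\ldots,a_{f_m}(p))$ says precisely that $U \subset \bigcup_{(A_1,\ldots,A_m)\in B} L_{(A_1,\ldots,A_m)}$, a finite union of hyperplanes. Since $B$ is finite and omits the zero vector, these are genuinely proper hyperplanes.

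Next I would apply part (2) of Lemma \ref{notcontain} to obtain a contradiction: a set that is dense in an open subset of the $\ell$-adic space cannot be contained in a finite union of hyperplanes of the complexified rational vector space. The only technical point is that Lemma \ref{notcontain} is stated for $V = \QQ^n$ and hyperplanes in $V\otimes\CC$, whereas here the coefficients $a_{f_j}(p)$ live in the Hecke algebras $H_{f_j}$, which are orders in number fields rather than in $\QQ$ itself. I would resolve this by fixing $\QQ$-bases of the $E_{f_j}$ and expressing everything in coordinates, so that $H_{f_1}\times\cdots\times H_{f_m}$ becomes a subset of a $\QQ$-vector space $\QQ^n$ with $n = \sum_j [E_{f_j}:\QQ]$, and each hyperplane $L_{(A_1,\ldots,A_m)}$ becomes a (possibly larger, but still finite) collection of hyperplanes in $\QQ^n\otimes\CC$ after expanding the $E_{f_j}\otimes\CC$-linear conditions into $\CC$-coordinates. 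The density of $U$ in an open $\ell$-adic subset survives this linear change of coordinates.

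The main obstacle I anticipate is verifying that the density furnished by Lemma \ref{denseopen} is genuinely density in an open subset of the full space $\QQ^n\otimes\QQ_\ell$ in these coordinates, so that Lemma \ref{notcontain} applies verbatim. This requires that the non-conjugacy hypothesis on the $f_j$ be used exactly where it is needed — it is the input that makes the projection of the Galois image surjective onto each pairwise product in Lemma \ref{denseopen}, and hence yields an open (not merely positive-codimension) image. Once this is in place, the contradiction $U\not\subset\bigcup L_i$ is immediate, completing the proof.
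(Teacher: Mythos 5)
Your overall strategy is exactly the paper's: interpret the hypothesis as saying that the tuples $(a_{f_1}(p),\ldots,a_{f_m}(p))$ all lie in a finite union of hyperplanes, invoke Lemma \ref{denseopen} for a suitable prime $\ell$, and contradict this via Lemma \ref{notcontain}(2) after identifying $E_{f_1}\times\cdots\times E_{f_m}$ with $\QQ^n$. The coordinate bookkeeping you describe is also how the paper handles the passage from $E_{f_j}$-linear conditions to honest hyperplanes in $\QQ^n\otimes\CC$ (it carries this out in detail in Lemmas \ref{conjugate} and \ref{mixed}).

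There is, however, one step you assert without justification, and your own hedge makes the problem visible. Lemma \ref{denseopen} says that $H_{f_1}\times\cdots\times H_{f_m}$ --- the product of the full Hecke $\ZZ$-algebras --- is dense in an open subset of the $\ell$-adic space. What your argument needs is that the much smaller set $U=\{(a_{f_1}(p),\ldots,a_{f_m}(p))\ |\ p \text{ prime}\}$ is dense in an open subset, because $U$ is the set the hypothesis traps inside $\bigcup L_{(A_1,\ldots,A_m)}$. Your parenthetical ``or rather the $\ZZ$-algebra it generates'' retreats to the statement Lemma \ref{denseopen} actually provides, but that weaker statement does not feed into Lemma \ref{notcontain}: a set can generate a dense subalgebra while itself lying on a single hyperplane. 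The missing bridge is the one the paper supplies explicitly: since $a_{f_i}(p)=\tr\,\rho_{f_i,\ell}(\mathrm{Frob}_p)$ for $p\neq\ell$ and the Frobenius elements are dense in $G_{\QQ}$ by Chebotarev, the closure of $U$ contains the set of traces of the full image $\mathcal{G}$ of $\rho_{f_1,\ell}\times\cdots\times\rho_{f_m,\ell}$; since that image contains $\mathrm{SL}_2(H_{f_1}\otimes\ZZ_\ell)\times\cdots\times\mathrm{SL}_2(H_{f_m}\otimes\ZZ_\ell)$ (the claim proved inside Lemma \ref{denseopen}), the trace set contains an open subset, so $U$ itself is dense in an open set. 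With that inserted, your argument closes and coincides with the paper's proof.
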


\begin{proof}
Suppose that $B$ is a finite subset of $(E_{f_1}\otimes \CC) \times \cdots \times (E_{f_m} \otimes \CC)$ such that $(0, \ldots, 0) \not\in B$ and for any primes $p$, we have
\[
A_1 a_{f_1}(p) + \cdots + A_m a_{f_m}(p) = 0
\]
for some $(A_1, \ldots, A_m) \in B$.
Then, the set
\[
C = \bigcup_{(A_1, \ldots, A_m)\in B} \{ (x_1, \ldots, x_m) \in (E_{f_1}\otimes \CC) \times \cdots \times (E_{f_m}\otimes \CC)\ |\ A_1x_1 + \cdots + A_mx_m = 0\}
\]
is a finite union of hyperplanes $(E_{f_1}\otimes \CC) \times \cdots \times (E_{f_m}\otimes \CC)$.

By Lemma \ref{denseopen}, we see that there is a prime $\ell$ such that
$H_{f_1}\times \cdots \times H_{f_{m}}$ is dense in an open subset of $(H_{f_1}\otimes \QQ_{\ell}) \times \cdots \times (H_{f_m}\otimes \QQ_{\ell})$.
Since the trace of  $\rho_{f_i,\ell}(\mathrm{Frob}_p)$ is $a_{f_i}(p)$ for primes $p\neq \ell$ and $\{ \mathrm{Frob}_p\ |\ \text{$p$ is a prime}\}$ is dense in $G_{\QQ}$ by Chebotarev's density theorem, we see that the set
\[
T = \{ (a_{f_1}(p), \ldots, a_{f_n}(p))\ |\ \text{$p$ is a prime with $p\neq \ell$} \}
\]
is a dense subset of $H_{f_1}\times \cdots \times H_{f_{m}}$.
Since we have
\[
(H_{f_1}\otimes \QQ_{\ell}) \times \cdots \times (H_{f_m}\otimes \QQ_{\ell}) \cong (E_{f_1} \times \cdots \times E_{f_m}) \otimes \QQ_{\ell}
\]
and
\[
(E_{f_1}\otimes \CC) \times \cdots \times (E_{f_n}\otimes \CC) \cong (E_{f_1} \times \cdots \times E_{f_m}) \otimes \CC,
\]
by Lemma \ref{notcontain}, the set $T$ is not contained in any finite union of hyperplanes in $(E_{f_1}\otimes \CC) \times \cdots \times (E_{f_m} \otimes \CC)$.
This is a contradiction since $T$ is contained in $C$ by the assumption.
\end{proof}

Let $f$ be a normalized Hecke eigenform on $\SL_2(\ZZ)$.
Let $m = [E_f:\QQ]$ and $\sigma_1, \ldots, \sigma_m$ be the embeddings from $E_f$ to $\bar{\QQ}$.
Note that $E_f \otimes \QQ_{\ell} \cong (\QQ_{\ell})^m$.
Let $\{\tau_1, \ldots, \tau_m\}$ be a basis of $E_f$ over $\QQ$.
Then, for each $n>0$, the coefficient $a_f(n)$ can be written as a linear combination of $\tau_1, \ldots, \tau_m$, i.e.,
\[
a_f(n) = a_1(n)\tau_1 + \cdots + a_m(n)\tau_m
\]
for $a_i(n)\in\QQ$.
This means that
\begin{equation} \label{decomposition}
a_f(n)^{\sigma} = a_1(n)\tau_1^{\sigma} + \cdots + a_m(n)\tau_m^{\sigma}
\end{equation}
for $\sigma\in G_{\QQ}$.
From this, we prove the following lemma.

\begin{lem} \label{conjugate}
There are no finite subsets $B$ of $\CC^m$ such that $(0, \ldots, 0)\not\in B$ and for any primes $p$, we have
\begin{equation} \label{sigmaequation}
A_1 a_f(p)^{\sigma_1} + \cdots + A_m a_f(p)^{\sigma_m} = 0
\end{equation}
for some $(A_1, \ldots, A_m)\in B$.
\end{lem}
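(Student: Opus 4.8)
The plan is to argue by contradiction and reduce the statement to Lemma \ref{notcontain}, but the main obstacle is that the numbers $a_f(p)^{\sigma_1}, \ldots, a_f(p)^{\sigma_m}$ are the Fourier coefficients of the Galois conjugates of $f$, all of which are conjugate to one another under $G_{\QQ}$; consequently Lemma \ref{nonconjugate} does not apply here. To get around this I would abandon the conjugate coordinates and work instead with the rational coordinates $a_1(p), \ldots, a_m(p)$ introduced in \eqref{decomposition}.

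So suppose, for contradiction, that such a finite set $B \subset \CC^m$ with $(0,\ldots,0) \notin B$ exists. For a fixed $(A_1,\ldots,A_m) \in B$, substituting \eqref{decomposition} into the left side of \eqref{sigmaequation} and interchanging the order of summation would give
\[
\sum_{i=1}^m A_i\, a_f(p)^{\sigma_i} = \sum_{j=1}^m B_j\, a_j(p), \qquad B_j := \sum_{i=1}^m A_i\, \tau_j^{\sigma_i},
\]
so each relation in the hypothesis becomes a linear relation $\sum_j B_j a_j(p) = 0$ among the rational numbers $a_j(p)$. The passage $(A_i) \mapsto (B_j)$ is the linear map with matrix $(\tau_j^{\sigma_i})_{i,j}$, whose determinant is nonzero (its square is the discriminant of the basis $\tau_1, \ldots, \tau_m$ of the separable extension $E_f/\QQ$). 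Hence nonzero tuples $(A_i)$ correspond to nonzero tuples $(B_j)$, and to $B$ there corresponds a finite collection of genuine complex hyperplanes $L_1, \ldots, L_r$ in $V \otimes \CC$, where $V = \QQ^m$; the hypothesis then says that the rational point $(a_1(p), \ldots, a_m(p))$ lies in $L_1 \cup \cdots \cup L_r$ for every prime $p$.

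It remains to rule this out, and the crucial input is the single-form density statement (the $m=1$ analogue of Lemma \ref{denseopen}, which for one form follows at once from $\mathcal{G}_{f,\ell} = \mathcal{A}_{f,\ell}$ without recourse to Lemma \ref{multi}). For a sufficiently large prime $\ell$ one has $\mathcal{G}_{f,\ell} = \mathcal{A}_{f,\ell} \supseteq \SL_2(H_f \otimes \ZZ_{\ell})$. Because the trace map $\SL_2(R) \to R$ is surjective for any commutative ring $R$ (the matrix $\sm t & -1 \\ 1 & 0 \esm$ has trace $t$) and $\tr \rho_{f,\ell}(\mathrm{Frob}_p) = a_f(p)$, Chebotarev's density theorem shows that $\{a_f(p) : p \neq \ell\}$ is dense in the open subset $H_f \otimes \ZZ_{\ell}$ of $H_f \otimes \QQ_{\ell} = E_f \otimes \QQ_{\ell}$. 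Reading this through the basis $\tau_1, \ldots, \tau_m$, the set $U = \{(a_1(p), \ldots, a_m(p)) : p \neq \ell\}$ is dense in an open subset of $V \otimes \QQ_{\ell}$. By Lemma \ref{notcontain}(2), $U$ cannot lie inside $L_1 \cup \cdots \cup L_r$, so some prime $p$ satisfies $\sum_j B_j a_j(p) \neq 0$ for every $(A_1,\ldots,A_m) \in B$; this contradicts the hypothesis and completes the proof.
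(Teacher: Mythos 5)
Your proposal is correct and follows essentially the same route as the paper: both pass from the conjugate coordinates $a_f(p)^{\sigma_i}$ to the rational coordinates $a_j(p)$ via the invertible matrix $\left(\tau_j^{\sigma_i}\right)$, observe that the hypothesis then confines $\left(a_1(p),\ldots,a_m(p)\right)$ to a finite union of genuine hyperplanes, and contradict this with the $\ell$-adic density of $\{a_f(p)\}$ (from the largeness of the Galois image together with Chebotarev) combined with Lemma \ref{notcontain}(2). Your additional justifications --- the discriminant argument for invertibility and the explicit trace-surjectivity step --- only make explicit what the paper leaves implicit.
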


\begin{proof}
Suppose that $B$ is a finite subset of $\CC^m$ such that $(0, \ldots, 0)\not\in B$ and for any primes $p$, we have
\[
A_1 a_f(p)^{\sigma_1} + \cdots + A_m a_f(p)^{\sigma_m} = 0
\]
for some $(A_1, \ldots, A_m)\in B$.
By (\ref{decomposition}), we see that the equation (\ref{sigmaequation}) is equivalent to
\[
(A_1,  \ldots, A_m)
\sm \tau_1^{\sigma_1} & \cdots & \tau_m^{\sigma_1} \\
        \vdots                    & \ddots & \vdots                    \\
        \tau_1^{\sigma_m} & \cdots & \tau_m^{\sigma_m} \esm
\sm a_1(p) \\ \vdots \\ a_m(p) \esm
= 0.
\]
Since $\{\tau_1, \ldots, \tau_m\}$ is a basis of $E_f$ over $\QQ$, the matrix
\[
\sm \tau_1^{\sigma_1} & \cdots & \tau_m^{\sigma_1} \\
        \vdots                    & \ddots & \vdots                    \\
        \tau_1^{\sigma_m} & \cdots & \tau_m^{\sigma_m} \esm
\]
is invertible.
Therefore, we have
\[
(A'_1, \ldots, A'_m) =  (A_1, \ldots,  A_m)
\sm \tau_1^{\sigma_1} & \cdots & \tau_m^{\sigma_1} \\
        \vdots                    & \ddots & \vdots                    \\
        \tau_1^{\sigma_m} & \cdots & \tau_m^{\sigma_m} \esm
\neq (0, \ldots, 0)
\]
since $(A_1, \ldots, A_m) \neq (0, \ldots, 0)$.
This means that the set
\[
\{(a_1(p), \ldots, a_m(p))\ |\ \text{$p$ is a prime}\}
\]
is a subset of the set
\begin{equation} \label{closedset}
C = \bigcup_{(A_1, \ldots, A_m)\in B} \{(x_1, \ldots, x_m) \in \CC^m\ |\ A'_1x_1 + \cdots + A'_mx_m = 0\},
\end{equation}
which is a finite union of hyperplanes in $E_f \otimes \CC$.

By Lemma \ref{denseopen}, we see that there is a prime $\ell$ such that
 $H_{f}$ is dense in an open subset of $H_{f}\otimes \QQ_{\ell}$.
Since  the trace of $\rho_{f,\ell}(\mathrm{Frob}_p)$ is $a_{f}(p)$ for primes $p\neq \ell$ and $\{ \mathrm{Frob}_p\ |\ \text{$p$ is a prime}\}$ is dense in $G_{\QQ}$ by Chebotarev's density theorem, we see that the set
\[
T = \{ a_f(p)\ |\ \text{$p$ is a prime with $p\neq \ell$}\}
\]
is a dense subset of $H_f$.
Then, by Lemma \ref{notcontain}, the set $T$ is not contained in any finite union of hyperplanes in $E_{f}\otimes \CC$.
We consider the isomorphism from $E_f$ to $\QQ^m$ defined by $a\mapsto (a_1, \ldots, a_m)$, where $a_1, \ldots, a_m$ is determined by the decomposition
\[
a = a_1 \tau_1 + \cdots + a_m\tau_m.
\]
By using this isomorphism, we see that the set
\[
T' = \{ (a_1(p), \ldots, a_m(p))\ |\ \text{$p$ is a prime with $p\neq\ell$}\}
\]
is not contained in any finite union of hyperplanes in $\CC^m$.
This is a contradiction since  $T'$ is contained in $C$ by the assumption.
\end{proof}

Suppose that $f_1, \ldots, f_m$ are normalized Hecke eigenforms such that any two of them are not conjugate under the action of $G_{\QQ}$.
Let $a_i(n)$ be the $n$th Fourier coefficient of $f_i$, i.e.,
\[
f_i(z) = \sum_{n>0} a_i(n)q^n.
\]
For each $i$, let $t_i = [E_{f_i} : \QQ]$ and $\{ \sigma_{i, 1}, \ldots, \sigma_{i,t_i}\}$ be the embeddings of $E_{f_i}$ to $\bar{\QQ}$.
Let $\{\tau_{i,1}, \ldots, \tau_{i,t_i}\}$ be a basis of $E_{f_i}$ over $\QQ$.
Therefore, $a_i(n)$ can be written as a linear combination of $\tau_{i,1}, \ldots, \tau_{i,t_i}$, i.e.,
\[
a_i(n) = a_{i,1}(n)\tau_{i,1} + \cdots + a_{i,t_i}(n)\tau_{i,t_i}
\]
for $a_{i,j}(n)\in\QQ$.
From this, we see that
\begin{equation} \label{decomposition2}
a_i(n)^{\sigma} = a_{i,1}(n)\tau_{i,1}^{\sigma} + \cdots + a_{i,t_i}(n)\tau_{i,t_i}^{\sigma}
\end{equation}
for $\sigma\in G_{\QQ}$.
Then, we have the following lemma.

\begin{lem} \label{mixed}
Let $t = t_1 + \cdots + t_m$.
There are no finite subsets $B$ of $\CC^t$ such that $(0, \ldots, 0)\not \in B$ and for any  primes $p$, we have
\begin{equation} \label{sigmaequation2}
\sum_{i=1}^m \sum_{j=1}^{t_i} A_{i,j} a_i(p)^{\sigma_{i,j}} = 0
\end{equation}
for some $(A_{1, 1}, \ldots, A_{1, t_1}, \ldots, A_{m,1}, \ldots, A_{m,t_m})\in B$.
\end{lem}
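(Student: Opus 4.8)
The plan is to combine the arguments of Lemmas \ref{nonconjugate} and \ref{conjugate}: Lemma \ref{nonconjugate} handles several non-conjugate forms but only their ``first'' embeddings, whereas Lemma \ref{conjugate} handles all conjugates of a single form; here I must do both at once. I argue by contradiction. Suppose such a finite set $B \subset \CC^t$, with $(0, \ldots, 0) \notin B$, exists, so that for every prime $p$ the relation (\ref{sigmaequation2}) holds for some element of $B$.

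The key step is to pass from the conjugates $a_i(p)^{\sigma_{i,j}}$ to the rational coordinates $a_{i,l}(p)$ of (\ref{decomposition2}). Fix $(A_{1,1}, \ldots, A_{m,t_m}) \in B$. Substituting (\ref{decomposition2}) into (\ref{sigmaequation2}) and interchanging the order of summation gives
\[
\sum_{i=1}^m \sum_{j=1}^{t_i} A_{i,j}\, a_i(p)^{\sigma_{i,j}} = \sum_{i=1}^m \sum_{l=1}^{t_i} \left( \sum_{j=1}^{t_i} A_{i,j}\, \tau_{i,l}^{\sigma_{i,j}} \right) a_{i,l}(p).
\]
For each $i$, set $(A'_{i,1}, \ldots, A'_{i,t_i}) = (A_{i,1}, \ldots, A_{i,t_i}) M_i$, where $M_i$ is the $t_i \times t_i$ matrix with $(j,l)$ entry $\tau_{i,l}^{\sigma_{i,j}}$. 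Exactly as in Lemma \ref{conjugate}, each $M_i$ is invertible because $\{\tau_{i,1}, \ldots, \tau_{i,t_i}\}$ is a $\QQ$-basis of $E_{f_i}$; hence the block-diagonal matrix $\mathrm{diag}(M_1, \ldots, M_m)$ is invertible and sends the nonzero vector $(A_{i,j})$ to a nonzero vector $(A'_{i,l})$. Thus (\ref{sigmaequation2}) becomes the single linear relation $\sum_{i,l} A'_{i,l}\, a_{i,l}(p) = 0$ with coefficient vector $\neq 0$, and the set of rational coordinate vectors $\{(a_{i,l}(p))_{i,l}\ |\ p \text{ prime}\}$ is contained in the finite union of hyperplanes
\[
C = \bigcup_{(A_{i,j}) \in B} \left\{ (x_{i,l}) \in \CC^t \ \Big|\ \sum_{i,l} A'_{i,l} x_{i,l} = 0 \right\}
\]
in $\CC^t \cong (E_{f_1} \times \cdots \times E_{f_m}) \otimes \CC$.

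It remains to contradict this, and here the argument is that of the previous lemmas. By Lemma \ref{denseopen} there is a prime $\ell$ for which $H_{f_1} \times \cdots \times H_{f_m}$ is dense in an open subset of $(H_{f_1} \otimes \QQ_\ell) \times \cdots \times (H_{f_m} \otimes \QQ_\ell)$. Since the trace of $\rho_{f_i,\ell}(\mathrm{Frob}_p)$ equals $a_i(p)$ for $p \neq \ell$ and the Frobenius elements are dense in $G_{\QQ}$ by Chebotarev's density theorem, the set $T = \{(a_1(p), \ldots, a_m(p))\ |\ p \neq \ell\}$ is dense in $H_{f_1} \times \cdots \times H_{f_m}$, hence dense in an open subset of $(H_{f_1} \otimes \QQ_\ell) \times \cdots \times (H_{f_m} \otimes \QQ_\ell)$. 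Transporting along the identifications
\[
(H_{f_1} \otimes \QQ_\ell) \times \cdots \times (H_{f_m} \otimes \QQ_\ell) \cong (E_{f_1} \times \cdots \times E_{f_m}) \otimes \QQ_\ell \cong \QQ^t \otimes \QQ_\ell
\]
induced by the bases $\{\tau_{i,l}\}$, the image $T'$ of $T$ (namely the rational coordinate vectors $(a_{i,l}(p))$) is dense in an open subset of $V \otimes \QQ_\ell$ with $V = \QQ^t$. By Lemma \ref{notcontain}(2), $T'$ is not contained in any finite union of hyperplanes in $V \otimes \CC = \CC^t$, contradicting $T' \subset C$.

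I expect the only genuinely new point to be the block-diagonal change of coordinates in the second paragraph: one must check that decoupling the conjugates form-by-form preserves nonvanishing of the coefficient vector, which is what reduces (\ref{sigmaequation2}) to a single $\QQ$-rational linear relation to which Lemma \ref{notcontain} applies. The care with the finite-index orders $H_{f_i} \subseteq E_{f_i}$ and the $\QQ_\ell$- and $\CC$-tensor identifications is routine bookkeeping once the single-form case (Lemma \ref{conjugate}) and the multi-form case (Lemma \ref{nonconjugate}) are in hand.
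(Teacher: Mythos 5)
Your proposal is correct and follows essentially the same route as the paper's own proof: the block-diagonal change of coordinates $\mathrm{diag}(M_1,\ldots,M_m)$ with blocks $(\tau_{i,l}^{\sigma_{i,j}})$, the observation that its invertibility preserves nonvanishing of the coefficient vector, and the final contradiction via Lemma \ref{denseopen}, Chebotarev, and Lemma \ref{notcontain} are exactly the steps the authors take. No gaps.
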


\begin{proof}
Suppose that $B$ is a finite subset of $\CC^t$ such that $(0, \ldots, 0)\not \in B$ and for any primes $p$, we have
\[
\sum_{i=1}^m \sum_{j=1}^{t_i} A_{i,j} a_i(p)^{\sigma_{i,j}} = 0
\]
for some $(A_{1, 1}, \ldots, A_{1, t_1}, \ldots, A_{m,1}, \ldots, A_{m,t_m})\in B$.

By (\ref{decomposition2}), we see that the equation (\ref{sigmaequation2}) is equivalent to
\[
(A_{1,1}, \ldots, A_{1,t_1}, \ldots, A_{m,1}, \ldots, A_{m,t_m})
\sm \tau_{1,1}^{\sigma_{1,1}} & \cdots & \tau_{1, t_1}^{\sigma_{1,1}}   &            &            &             &                                          &             &                                           \\
        \vdots                               & \ddots & \vdots                                    &            & 0          &             &                                         &   0         &                                            \\
        \tau_{1,1}^{\sigma_{1,t_1}} & \cdots & \tau_{1, t_1}^{\sigma_{1, t_1}} &            &            &             &                                          &             &                                            \\
                                                  &             &                                              & \ddots &            &             &                                          &             &                                           \\
                                                  &  0         &                                              &             & \ddots &            &                                          & 0           &                                           \\
                                                  &             &                                              &              &           & \ddots &                                          &             &                                            \\
                                                  &             &                                             &               &           &            & \tau_{m,1}^{\sigma_{m,1}} & \cdots & \tau_{m,t_m}^{\sigma_{m,1}} \\
                                                  & 0          &                                              &               & 0        &            & \vdots                               & \ddots  & \vdots                                 \\
                                                  &            &                                              &               &           &            & \tau_{m,1}^{\sigma_{m,t_m}} & \cdots & \tau_{m,t_m}^{\sigma_{m,t_m}} \esm
\sm a_{1,1}(p) \\ \vdots \\ a_{1,t_1}(p) \\ \vdots \\ a_{m,1}(p) \\ \vdots \\ a_{m,t_m}(p)\esm
= 0.
\]
Note that the matrix
\[
\sm \tau_{1,1}^{\sigma_{1,1}} & \cdots & \tau_{1, t_1}^{\sigma_{1,1}}   &            &            &             &                                          &             &                                           \\
        \vdots                               & \ddots & \vdots                                    &            & 0          &             &                                         &   0         &                                            \\
        \tau_{1,1}^{\sigma_{1,t_1}} & \cdots & \tau_{1, t_1}^{\sigma_{1, t_1}} &            &            &             &                                          &             &                                            \\
                                                  &             &                                              & \ddots &            &             &                                          &             &                                           \\
                                                  &  0         &                                              &             & \ddots &            &                                          & 0           &                                           \\
                                                  &             &                                              &              &           & \ddots &                                          &             &                                            \\
                                                  &             &                                             &               &           &            & \tau_{m,1}^{\sigma_{m,1}} & \cdots & \tau_{m,t_m}^{\sigma_{m,1}} \\
                                                  & 0          &                                              &               & 0        &            & \vdots                               & \ddots  & \vdots                                 \\
                                                  &            &                                              &               &           &            & \tau_{m,1}^{\sigma_{m,t_m}} & \cdots & \tau_{m,t_m}^{\sigma_{m,t_m}} \esm
\]
is invertible since the matrix
\[
\sm \tau_{i,1}^{\sigma_{i,1}} & \cdots & \tau_{i, t_i}^{\sigma_{i,1}} \\
        \vdots                               & \ddots & \vdots                               \\
        \tau_{i,1}^{\sigma_{i,t_i}} & \cdots & \tau_{i, t_i}^{\sigma_{i, t_i}} \esm
\]
is invertible for each $i$.
Therefore, if we let
\begin{eqnarray*}
&& (A'_{1,1}, \ldots, A'_{1,t_1}, \ldots, A'_{m,1}, \ldots, A'_{m,t_m})\\
&& = (A_{1,1}, \ldots, A_{1,t_1}, \ldots, A_{m,1}, \ldots, A_{m,t_m})
\sm \tau_{1,1}^{\sigma_{1,1}} & \cdots & \tau_{1, t_1}^{\sigma_{1,1}}   &            &            &             &                                          &             &                                           \\
        \vdots                               & \ddots & \vdots                                    &            & 0          &             &                                         &   0         &                                            \\
        \tau_{1,1}^{\sigma_{1,t_1}} & \cdots & \tau_{1, t_1}^{\sigma_{1, t_1}} &            &            &             &                                          &             &                                            \\
                                                  &             &                                              & \ddots &            &             &                                          &             &                                           \\
                                                  &  0         &                                              &             & \ddots &            &                                          & 0           &                                           \\
                                                  &             &                                              &              &           & \ddots &                                          &             &                                            \\
                                                  &             &                                             &               &           &            & \tau_{m,1}^{\sigma_{m,1}} & \cdots & \tau_{m,t_m}^{\sigma_{m,1}} \\
                                                  & 0          &                                              &               & 0        &            & \vdots                               & \ddots  & \vdots                                 \\
                                                  &            &                                              &               &           &            & \tau_{m,1}^{\sigma_{m,t_m}} & \cdots & \tau_{m,t_m}^{\sigma_{m,t_m}} \esm,
\end{eqnarray*}
then $(A'_{1,1}, \ldots, A'_{1,t_1}, \ldots, A'_{m,1}, \ldots, A'_{m,t_m}) \neq (0, \ldots, 0)$ since $(A_{1,1}, \ldots, A_{1,t_1}, \ldots, A_{m,1}, \ldots, A_{m,t_m}) \neq (0, \ldots, 0)$.
This means that the set
\[
\{(a_{1,1}(p), \ldots, a_{1,t_1}(p), \ldots, a_{m,1}(p), \ldots, a_{m,t_m}(p))\ |\ \text{$p$ is a prime}\}
\]
is a subset of the set
\begin{equation} \label{closedset2}
C = \bigcup_{(A_{1,1}, \ldots, A_{1,t_1}, \ldots, A_{m,1}, \ldots, A_{m,t_m})\in B} \left\{(x_{1,1}, \ldots, x_{1,t_1}, \ldots, x_{m,1},\ldots, x_{m,t_m})\in \CC^t\ \bigg|\ \sum_{i=1}^m \sum_{j=1}^{t_i} A'_{i,j}x_{i,j} = 0\right\},
\end{equation}
which is a finite union of hyperplanes in $\CC^t$.

As in the proof of Lemma \ref{nonconjugate}, there is a prime $\ell$ such that the set
\[
T = \{ (a_{f_1}(p), \ldots, a_{f_n}(p))\ |\ \text{$p$ is a prime with $p\neq \ell$} \}
\]
is not contained in any finite union of hyperplanes in $(E_{f_1}\otimes \CC) \times \cdots \times (E_{f_m} \otimes \CC)$.
As in the proof of Lemma \ref{conjugate}, we consider the isomorphism from $E_{f_1} \times \cdots \times E_{f_m}$ to $\QQ^t$ defined by
\[
(a_1, \ldots, a_m) \mapsto (a_{1,1}, \ldots, a_{1, t_1}, \ldots, a_{m,1},\ldots, a_{m,t_m}),
\]
where $a_{1,1}, \ldots, a_{1, t_1}, \ldots, a_{m,1},\ldots, a_{m,t_m}$ are determined by the decomposition
\[
a_i = a_{i,1} \tau_{i,1} + \cdots + a_{i,t_i}\tau_{i,t_i}
\]
for each $i$.
By this isomorphism, we see that
\[
T' = \{(a_{1,1}(p), \ldots, a_{1,t_1}(p), \ldots, a_{m,1}(p), \ldots, a_{m,t_m}(p))\ |\ \text{$p$ is a prime with $p\neq \ell$}\}
\]
is not contained in any finite union of hyperplanes in $\CC^t$.
This is a contradiction since  $T'$ is contained in $C$ by the assumption.
\end{proof}

From Lemma \ref{mixed}, we can prove the following theorem.

\begin{thm} \label{cuspcase}
Suppose that $f$ and $g$ are nonzero cusp forms on $\SL_2(\ZZ)$.
If $f$ is not a constant multiple of $g$, then
$R(f,g)$ defined in (\ref{ratioset}) is not finite.
\end{thm}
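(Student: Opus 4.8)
The plan is to prove the contrapositive: assuming $f$ is not a constant multiple of $g$, I suppose for contradiction that $R(f,g)$ is finite and derive a contradiction from Lemma~\ref{mixed}. The entire argument is a reduction to the linear-independence statement of that lemma, so the arithmetic content (Ribet's theorem) is already packaged there; what remains is bookkeeping with an eigenform decomposition and with the projective ratios.

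First I would fix a decomposition of $f$ and $g$ into Hecke eigenforms. Since the normalized Hecke eigenforms of each weight form a basis of the corresponding space of cusp forms, and Galois conjugation preserves the weight, I can choose finitely many normalized Hecke eigenforms $f_1,\ldots,f_m$ on $\SL_2(\ZZ)$, pairwise non-conjugate under $G_\QQ$, so that both $f$ and $g$ lie in the span of the Galois conjugates $f_i^{\sigma_{i,j}}$ ($1\le i\le m$, $1\le j\le t_i$). Writing
\[
f=\sum_{i=1}^m\sum_{j=1}^{t_i} c_{i,j} f_i^{\sigma_{i,j}},\qquad g=\sum_{i=1}^m\sum_{j=1}^{t_i} d_{i,j} f_i^{\sigma_{i,j}},
\]
the $p$th Fourier coefficient of $f_i^{\sigma_{i,j}}$ is $a_i(p)^{\sigma_{i,j}}$, so $a_f(p)=\sum_{i,j} c_{i,j}a_i(p)^{\sigma_{i,j}}$ and $a_g(p)=\sum_{i,j} d_{i,j}a_i(p)^{\sigma_{i,j}}$. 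Because the eigenforms are linearly independent, this decomposition is unique; hence the coordinate vectors $\mathbf{c}=(c_{i,j})$ and $\mathbf{d}=(d_{i,j})$ in $\CC^t$ are both nonzero and, as $f$ is not a scalar multiple of $g$, $\CC$-linearly independent.

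Next I would translate the finiteness of $R(f,g)$ into a finite family of linear relations. Assume $R(f,g)=\{[\alpha_1:\beta_1],\ldots,[\alpha_N:\beta_N]\}$. For each prime $p$ with $(a_f(p),a_g(p))\neq(0,0)$, the equality $[a_f(p):a_g(p)]=[\alpha_s:\beta_s]$ is precisely $\beta_s a_f(p)-\alpha_s a_g(p)=0$; for the remaining primes both coefficients vanish and this equation holds for every $s$. Substituting the decompositions and setting $A_{i,j}^{(s)}:=\beta_s c_{i,j}-\alpha_s d_{i,j}$, I find that for every prime $p$ there is an index $s$ with $\sum_{i=1}^m\sum_{j=1}^{t_i} A_{i,j}^{(s)} a_i(p)^{\sigma_{i,j}}=0$. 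Thus the finite set $B:=\{(A_{i,j}^{(s)})_{i,j}\mid 1\le s\le N\}\subset\CC^t$ meets the hypotheses of Lemma~\ref{mixed}, provided it omits the zero vector.

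It then remains to check that $(0,\ldots,0)\notin B$. The $s$th element of $B$ vanishes exactly when $\beta_s\mathbf{c}-\alpha_s\mathbf{d}=0$; since $(\alpha_s,\beta_s)\neq(0,0)$, this would contradict the $\CC$-linear independence of $\mathbf{c}$ and $\mathbf{d}$. Hence no element of $B$ is zero, and Lemma~\ref{mixed} forbids such a $B$, contradicting the finiteness of $R(f,g)$. I expect the only delicate points to be the uniform treatment of forms of mixed weight when selecting the pairwise non-conjugate eigenforms, and the careful handling of the primes at which both coefficients vanish, so that the relation genuinely holds for \emph{all} primes as Lemma~\ref{mixed} demands; the substantive work is entirely absorbed into that lemma.
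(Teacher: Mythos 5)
Your proposal is correct and follows essentially the same route as the paper's proof: decompose $f$ and $g$ over the Galois conjugates of pairwise non-conjugate normalized Hecke eigenforms, convert each element of the finite set $R(f,g)$ into a linear relation among the conjugate coefficients, and invoke Lemma~\ref{mixed} for the contradiction. Your write-up is in fact a bit more careful than the paper's on two minor points (verifying that the zero vector is excluded from $B$ via the linear independence of the coefficient vectors, and handling primes where both coefficients vanish), but the substance is identical.
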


\begin{proof}
Since $f$ and $g$ are cusp forms, each function can be written as a linear combination of finitely many normalized Hecke eigenforms $\{f_1, \ldots, f_m\}$, i.e.,
\[
f = \sum_{i=1}^m a_if_i,\ g = \sum_{i=1}^m b_i f_i
\]
for $a_i, b_i \in \CC$.
For each $i$, we consider embeddings $\sigma_{i,1}, \ldots, \sigma_{i,t_i}$ from $E_{f_i}$ to $\bar{\QQ}$, where $t_i = [E_{f_i} : \QQ]$.
Consider the set
\[
A = \{ f_{i}^{\sigma_{i,j}}\ |\ 1\leq i\leq m,\ 1\leq j\leq t_i\}.
\]
We write
\[
t = |A|,\ A = \{h_1, \ldots, h_t\}.
\]
Since $A$ contains $\{f_1, \ldots, f_m\}$, both $f$ and $g$ can be written as linear combinations of elements of $A$, i.e.,
\[
f = \sum_{i=1}^t \alpha_i h_i,\ g = \sum_{i=1}^t \beta_i h_i
\]
for $\alpha_i, \beta_i\in \CC$.

Suppose that $R(f,g)$ is finite and that $f$ is not a constant multiple of $g$.
Let $a_i(n)$ be the $n$th Fourier coefficient of $h_i$.
Then, there is a finite subset $B$ of $\CC^t$ such that $(0, \ldots, 0)\not\in B$ and for each prime $p$, we have
\[
A_1 a_1(p) + \cdots + A_t a_t(p) = 0
\]
for some $(A_1, \ldots, A_t)\in B$.
This is a contradiction due to Lemma \ref{mixed}.
\end{proof}

\begin{rmk} \label{cusprmk}
Suppose that $f$ and $g$ are cusp forms.
They may be zero.
Then, Theorem \ref{cuspcase} implies that if $R(f,g)$ is finite, then there are constants $\alpha$ and $\beta$ such that  $\alpha f = \beta g$.
\end{rmk}

\section{Proof of the main result} \label{section4}
In this section, we prove  Theorem \ref{main1}.
Suppose that $f$ is not a cusp form and $g$ is a cusp form.
For a weakly holomorphic modular form $h\in M^!$,
let $m_h = |\mathrm{ord}_{\infty}(h)|$ and $k_h$ be the weight of $h$.
Since $g$ is a cusp form, the Fourier coefficients of $g$ should satisfy the Hecke bound
\[
a_g(n) = O(n^{k_g/2})
\]
as $n\to\infty$.
Then, by Theorem \ref{weaklycoeff}, we have
\[
\lim_{n\to\infty} \left| \frac{a_f(n)}{a_g(n)}\right| = \infty,
\]
which means that the set $R(f,g)$ cannot be finite.
This is a contradiction.
Therefore, both $f$ and $g$ are cusp forms or  none of them are cusp forms.
If both $f$ and $g$ are cusp forms, then by Theorem \ref{cuspcase}, $f = cg$ for some constant $c$.

Suppose that neither $f$ nor $g$ is a cusp form.
If  $m_f \neq m_g$, then we may assume that $m_f > m_g$.
By Theorem \ref{weaklycoeff}, we have
\[
\lim_{n\to\infty} \left|\frac{a_f(n)}{a_g(n)}\right| = \infty,
\]
which means that $R(f,g)$ is an infinite set.
This is a contradiction.
Therefore, $m_f = m_g$.
In the same way, we see that $k_f = k_g$.
By multiplying a nonzero constant to $g$,
we may assume that
\begin{equation} \label{constantassumption}
a_f(-m_f) = a_g(-m_g).
\end{equation}
Therefore, it is enough to show that $f = g$.

Suppose that $[\alpha:\beta]\in R(f,g)$ satisfying
\begin{equation} \label{subsequence}
\beta a_f(p) = \alpha a_g(p)
\end{equation}
for infinitely many primes $p$.
Such $[\alpha:\beta]$ exists since $R(f,g)$ is finite.
By the asymptotic expansion given in Theorem \ref{weaklycoeff}, we see that both $f$ and $g$ have only finitely many zero coefficients.
Therefore, both $\alpha$ and $\beta$ are nonzero.

Then, we have a strictly increasing sequence $\{ p_i\}$ of primes satisfying (\ref{subsequence}).
By Theorem \ref{weaklycoeff} and (\ref{subsequence}), we have
\begin{equation} \label{infiniteratio}
\frac{\alpha}{\beta} = \lim_{i\to\infty} \frac{a_f(p_i)}{a_g(p_i)} = \lim_{i\to\infty} \frac{a_f(-m_f) e^{4\pi\sqrt{p_i m_f}}} {a_g(-m_g) e^{4\pi\sqrt{p_i m_g}}} = 1
\end{equation}
since $m_f = m_g$ and $k_f = k_g$.
Therefore, we see that $[\alpha:\beta] = [1:1] \in R(f,g)$.

This implies that if $[\alpha:\beta] \in R(f,g)$ and $[\alpha:\beta]\neq [1:1]$, then the number of primes satisfying (\ref{subsequence}) is finite.
Since $R(f,g)$ is finite, we see that
\[
a_f(p) = a_g(p)
\]
for all but finitely many primes $p$.

Now, we prove that $f-g$ is a cusp form.
Suppose that $f - g$ is not a cusp form.
This means that the principal parts of $f$ and $g$ are not the same.
Let
\[
m_0 = \mathrm{max}\{ n\geq 0\ |\ a_f(-n) \neq a_g(-n)\}.
\]
Then, by (\ref{upperbound}), we see that $-m_0 \leq o_k$.
Let
\begin{eqnarray*}
\hat{f} &=& \sum_{m>m_0} a_f(-m)f_{k_f,m},\ \tilde{f} = f - \hat{f},\\
\hat{g} &=& \sum_{m>m_0} a_g(-m)f_{k_f,m},\ \tilde{g} = g - \hat{g},
\end{eqnarray*}
where $f_{k_f,m}$ is a weakly holomorphic modular form as in (\ref{basis}).
We denote by $a_{\hat{f}}(n)$ (resp. $a_{\tilde{f}}(n), a_{\hat{g}}(n)$, and $a_{\tilde{g}}(n)$) the $n$th Fourier coefficient of $\hat{f}$ (resp. $\tilde{f}, \hat{g}$, and $\tilde{g}$).
By the definition of $m_0$, we see that $\hat{f} = \hat{g}$.
Then, for all but finitely many primes $p$, we have
\begin{equation} \label{tildesub}
a_{\tilde{f}}(p) = a_{\tilde{g}}(p).
\end{equation}
This implies that $R(\tilde{f}, \tilde{g})$ is finite.

If $m_0>0$, then at least one of $\tilde{f}$ and $\tilde{g}$ is not a cusp form.
Since $R(\tilde{f}, \tilde{g})$ is finite, in the same way as above, we see that $m_{\tilde{f}} = m_{\tilde{g}} = m_0$.
Note that we have a strictly increasing sequence $\{p_i\}$ of primes satisfying (\ref{tildesub}).
Then, by the same argument as in (\ref{infiniteratio}), we have
\[
1 = \lim_{i\to\infty} \frac{a_{\tilde{f}}(p_i)}{a_{\tilde{g}}(p_i)} = \lim_{i\to\infty} \frac{a_{\tilde{f}}(-m_0) e^{4\pi\sqrt{p_i m_0}}}{a_{\tilde{g}}(-m_0) e^{4\pi\sqrt{p_i m_0}}} = \frac{a_{\tilde{f}}(-m_0)}{a_{\tilde{g}}(-m_0)}  .
\]
Therefore, we obtain $a_{\tilde{f}}(-m_0) = a_{\tilde{g}}(-m_0)$ which implies that $a_f(-m_0) = a_g(-m_0)$.
This is a contradiction due to the definition of $m_0$.

If $m_0 = 0$, then  both $\tilde{f}$ and $\tilde{g}$ are holomorphic modular forms
and $a_{\tilde{f}}(0) \neq a_{\tilde{g}}(0)$.
Then, we see that
\begin{eqnarray*}
\tilde{f} &=& a_{\tilde{f}}(0)f_{k_f,0} + f_c,\\
\tilde{g} &=& a_{\tilde{g}}(0)f_{k_f,0} + g_c
\end{eqnarray*}
for some cusp forms $f_c, g_c\in S_{k_f}$.
By the Hecke bound,  cusp forms $f_c(z) = \sum_{n>0} a_{f_c}(n)q^n$ and $g_c(z) = \sum_{n>0} a_{g_c}(n)q^n$  satisfy
\begin{equation} \label{cuspasym}
a_{f_c}(n), a_{g_c}(n) = O(n^{k_f/2}).
\end{equation}
By (\ref{tildesub}), we have a strictly increasing sequence $\{p_i\}$ of primes satisfying
\[
(a_{\tilde{f}}(0) - a_{\tilde{g}}(0)) a_{k_f,0}(p_i) = a_{g_c}(p_i) - a_{f_c}(p_i),
\]
where $a_{k_f,0}(n)$ is the $n$th Fourier coefficient of $f_{k_f,0}$.
This is a contradiction since
\[
\lim_{i\to\infty} \left| \frac{(a_{\tilde{f}}(0) - a_{\tilde{g}}(0)) a_{k_f,0}(p_i)} {a_{g_c}(p_i) - a_{f_c}(p_i)}\right| = \infty
\]
by Theorem \ref{weaklycoeff} and (\ref{cuspasym}).
In conclusion, $f-g$ should be a cusp form.

If $S_{k_f} = \{0\}$, the proof is completed.
Otherwise, we consider
\begin{eqnarray*}
F &=& f - \sum_{m\geq 0 } a_f(-m) f_{k_f, m},\\
G &=& g - \sum_{m\geq 0} a_g(-m) f_{k_f, m}.
\end{eqnarray*}
Both $F$ and $G$ are cusp forms and
\begin{equation} \label{cuspratio}
a_F(p) = a_G(p)
\end{equation}
for all but finitely many primes $p$, where $a_F(n)$ (resp. $a_G(n)$) denotes the $n$th Fourier coefficient of $F$ (resp. $G$).
Therefore, $R(F,G)$ is finite.
By Theorem \ref{cuspcase}, there are constants $\alpha',\beta'$ such that  $\beta' F = \alpha' G$.
By (\ref{cuspratio}), we see that
$F = G$, and this implies that $f = g$ since $a_f(-m) = a_g(-m)$ for all $m\geq 0$.
This completes the proof.

\section*{Acknowledgments}
The authors are grateful to the referee for  helpful comments and  corrections.
The authors also thank Jeremy Rouse for  useful comments on the previous version of this paper.



 
\end{document}